\documentclass[12pt]{amsart}
\usepackage[a4paper,margin=1in]{geometry}
\usepackage{amsmath,amssymb,amsthm,mathtools}
\usepackage{bm}
\usepackage{enumitem}
\usepackage{microtype}
\usepackage{hyperref}
\usepackage{tikz}
\usepackage{pgfplots}
\pgfplotsset{compat=1.18}
\usetikzlibrary{calc,positioning,arrows.meta,patterns,decorations.pathmorphing}
\usepackage{subcaption}

\title
[]
{Inverse source problems with reduced interior data for a coupled reaction-diffusion system}

\author{Xinyue Luo$^{1}$, Masahiro Yamamoto$^{2}$, Jin Cheng$^{1}$}

\theoremstyle{plain}
\newtheorem{theorem}{Theorem}[section]
\newtheorem{lemma}{Lemma}[section]

\newtheorem{corollary}{Corollary}[section]
\theoremstyle{definition}
\newtheorem{definition}{Definition}[section]
\theoremstyle{remark}
\newtheorem{remark}{Remark}[section]

\newcommand{\R}{\mathbb{R}}

\newcommand{\p}{\partial}
\newcommand{\ppp}{\partial}
\newcommand{\ooo}{\overline}
\newcommand{\OOO}{\Omega}
\newcommand{\www}{\widetilde}

\newcommand{\norm}[1]{\left\lVert#1\right\rVert}

\newcommand{\Om}{\Omega}

\newcommand{\Dt}{\p_t}

\newcommand{\Lap}{\Delta}

\usepackage{xcolor}
\newcommand{\revi}[1]{{\color{blue}#1}}

\begin{document}

\maketitle

\begin{center}
\footnotesize
$^{1}$School of Mathematical Sciences, Fudan University, Shanghai, China\\
$^{2}$Department of Mathematical Sciences, The University of Tokyo, Tokyo, Japan\\[0.3em]
E-mail: xinyueluo21@m.fudan.edu.cn, myama@next.odn.ne.jp, jcheng@fudan.edu.cn
\end{center}
\begin{abstract}
We consider a two-component semilinear reaction-diffusion system in a bounded spatial domain $\Omega$ over a time interval $(0,T)$, which governs the water density $u(x,t)$ and the vegetation biomass density $v(x,t)$ for $x\in\Omega$ and $0<t<T$. In this system, called the Klausmeier-Gray-Scott model, we assume that an unknown source depends only on the spatial variable and appears in the reaction-diffusion equation for $u$.

The main subject is the inverse source problem of determining a source term from limited data on $(u,v)$.
We establish two kinds of stability estimates by means of Carleman estimates. First, a Carleman estimate with a singular weight yields a Lipschitz stability estimate for the inverse source problem from data consisting of a snapshot $u(\cdot,t_0)$ in $\Omega$ and $(u,v)$ in a subdomain $\omega$ over a time interval. Second, without assuming boundary data, we prove a H\"older stability estimate in any interior subdomain $\Omega_0$ satisfying $\overline{\Omega_0}\subset\Omega$. We further study how much the observation data can be reduced while preserving uniqueness and stability in the inverse problem under suitable additional conditions.
\end{abstract}

\section{Introduction}\label{sec:intro}

The spontaneous formation of spatial patterns in dryland vegetation is a
striking ecological phenomenon observed across semiarid and arid regions
worldwide.
Regular structures---stripes, spots, gaps, and labyrinths---emerge from the
interaction between water availability and plant growth, and serve as
indicators of ecosystem health and potential desertification
\cite{rietkerk2008regular}.
Among the mathematical frameworks proposed to explain such self-organisation,
the model introduced by Klausmeier \cite{klausmeier1999regular} has been
particularly influential.

In the Klausmeier-Gray-Scott model, the interaction between water and vegetation is governed by a facilitative feedback mechanism: denser vegetation enhances water infiltration and uptake, which in turn promotes further plant growth. This positive feedback, combined with water loss through evaporation and plant mortality, drives the self-organised pattern formation.

We now formulate the mathematical model. Let $\Om\subset\R^n$, $n\ge 1$, be a bounded domain with $C^2$ boundary $\p\Om$, and let $T>0$. We write $Q:=\Om\times(0,T)$ and $\Sigma:=\p\Om\times(0,T)$. The Klausmeier-Gray-Scott system couples a water density $u$ with a vegetation biomass density $v$ through the nonlinear interaction term $uv^2$:
\begin{equation}\label{eq:forward}
\begin{cases}
\Dt u = d_1 \Lap u - u v^2 - a_1(x,t)\,u + f(x), & (x,t)\in Q,\\[4pt]
\Dt v = d_2 \Lap v + u v^2 - a_2(x,t)\,v, & (x,t)\in Q,
\end{cases}
\end{equation}
with Dirichlet boundary conditions
\begin{equation}\label{eq:bc}
u=g,\qquad v=h\qquad\text{on }\Sigma,
\end{equation}
and initial conditions
\begin{equation}\label{eq:ic}
u(\cdot,0)=u_0,\qquad v(\cdot,0)=v_0\qquad\text{in }\Om.
\end{equation}
Here $d_1,d_2>0$ are diffusion constants,
$a_1,a_2\in W^{1,\infty}(Q)$ are known nonnegative reaction coefficients 
representing loss rates, that is, evaporation for water, and mortality for 
vegetation. 
Moreover, $f\in L^2(\Om)$ is an unknown spatially dependent source and 
represents, for instance, external water supply.

The term $-uv^2$ in the first equation models water consumption by vegetation:
the uptake rate is proportional to the water density and the square of the
biomass, reflecting the facilitative effect of dense vegetation on water
infiltration.
The corresponding term $+uv^2$ in the second equation describes the conversion
of absorbed water into biomass growth.

In practice, the spatial distribution of water input into an ecosystem---due to
rainfall heterogeneity, terrain runoff, or irrigation---is often unknown or
difficult to measure directly.
This motivates the study of inverse source problems, in which one seeks to recover a spatially varying source $f(x)$ 
from partial observations of the solution.
Patterns of solutions and their dependence on system parameters have been 
studied extensively from the perspective of the forward problem
\cite{consolo2019secondary, sherratt2010pattern, ursino2007modeling}, but 
to the best of the authors' knowledge, there are no publications on 
inverse problems for Klausmeier-Gray-Scott system.
The present work addresses the mathematical analysis for the source
identifiability from limited measurements.

A further motivation arises from the practical constraints of ecological
monitoring.
Field measurements of vegetation biomass are often obtained through remote
sensing over large spatial regions, while water content measurement data 
are usually available only at sparse sensor locations.
This asymmetry between the observability of the two components leads naturally
to the question of whether observations of fewer state variables
suffice for source recovery, that is, the data-reduction problem is also 
studied in this paper.

Inverse problems for parabolic equations are usually ill-posed and require
overdetermined data for uniqueness and stability; see
\cite{isakov1998inverse,Y25} for general background.
A powerful approach to quantitative stability relies on Carleman estimates, following the method introduced by Bukhgeim and Klibanov \cite{bukhgeim1981global}; see also
Klibanov \cite{klibanov1992inverse}.
For scalar parabolic equations, Lipschitz stability for inverse source problems
was established by Imanuvilov and Yamamoto \cite{imanuvilov1998lipschitz}, 
Yamamoto and Zou \cite{YZ}; see also \cite{huang2020stability, yamamoto2009carleman}.
Inverse coefficient problems with initial or final time data were studied in
\cite{ImanuvilovYamamoto2024}.

For inverse problems for partial differential equations, 
a fundamental difficulty is that measurements may be available only for a 
subset of the state components or on a restricted interior region.
The seminal work of Cristofol, Gaitan, and Ramoul
\cite{cristofol2006inverse} established stability for a $2\times 2$ 
reaction--diffusion system using a Carleman estimate with observation of only one component; see also \cite{benabdallah2009inverse} for an extension by
Benabdallah, Cristofol, Gaitan, and Yamamoto to systems with first-order coupling.
Identification of two coefficients from data of one component in a nonlinear system was studied in \cite{cristofol2012identification}.
Further contributions on inverse problems for reaction--diffusion systems 
include \cite{boulakia2021numerical, kaltenbacher2020inverse, pyatkov2017some}.
Despite their significance in applications, inverse source problems for the Klausmeier--Gray--Scott vegetation model and related ecological and chemical reaction--diffusion systems have not been sufficiently investigated.

Now we formulate our main inverse problems for \revi{\eqref{eq:forward}}.
Let $\omega \subset\Om$ be a fixed nonempty open subdomain satisfying $\ooo{\omega} \subset \Om$ and let $t_0\in(0,T)$ be a fixed observation time. 
Here and henceforth, $\ooo{A}$ denotes the closure of a set $A$ under 
consideration.
Consider two admissible triples $(u,v,f)$ and $(\tilde u,\tilde v,\tilde f)$ in $\mathcal{U}(M,M_0)$ that solve the forward system \eqref{eq:forward}--\eqref{eq:bc} with the same coefficients $a_1, a_2$ and
boundary data $g, h$.

Our principal interest is 
\\
\medskip
\noindent\textbf{Inverse Source Problem.}
\emph{Given limited observation data of the solution $(u,v)$ to 
\eqref{eq:forward}, determine the unknown spatial source $f\in L^2(\Omega)$.}
\\

More precisely, our interest is in establishing stability estimates for the
source difference $f - \tilde{f}$ in terms of discrepancies in the observed
data. For determining $f$, we adopt the following kinds of data.
We arbitrarily fix $t_1, t_2 \in (0,T)$ with $t_1 < t_2$ and set 
\begin{equation}
 I:= (t_1, t_2).
 \label{eq:4}
\end{equation}
We discuss the inverse problem by the following four kinds of data.
\begin{enumerate}[label=$(\mathcal{M}_{\arabic*})$, leftmargin=3.5em]
\item\label{M1} \textbf{Reference measurement:}
$u(\cdot,t_0)|_\Omega$, $u|_{\omega\times I}$, $v|_{\omega\times I}$.
\item\label{M2} \textbf{Reduced measurement (observation of $v$ only on
$\omega\times I$):}
$u(\cdot,t_0)|_\Omega$, $v|_{\omega\times I}$.
\item\label{M3} \textbf{Reduced measurement (observation of $u$ only on
$\omega\times I$, with $f|_\omega$ given):}
$u(\cdot,t_0)|_\Omega$, $u|_{\omega\times I}$, and $f|_\omega$ is given
a priori.
\item\label{M4} \textbf{Reduced measurement (two snapshots with observation
of $u$ on $\omega\times I$):}
$u(\cdot,t_0)|_\Omega$, $v(\cdot,t_0)|_\Omega$, $u|_{\omega\times I}$.
\end{enumerate}

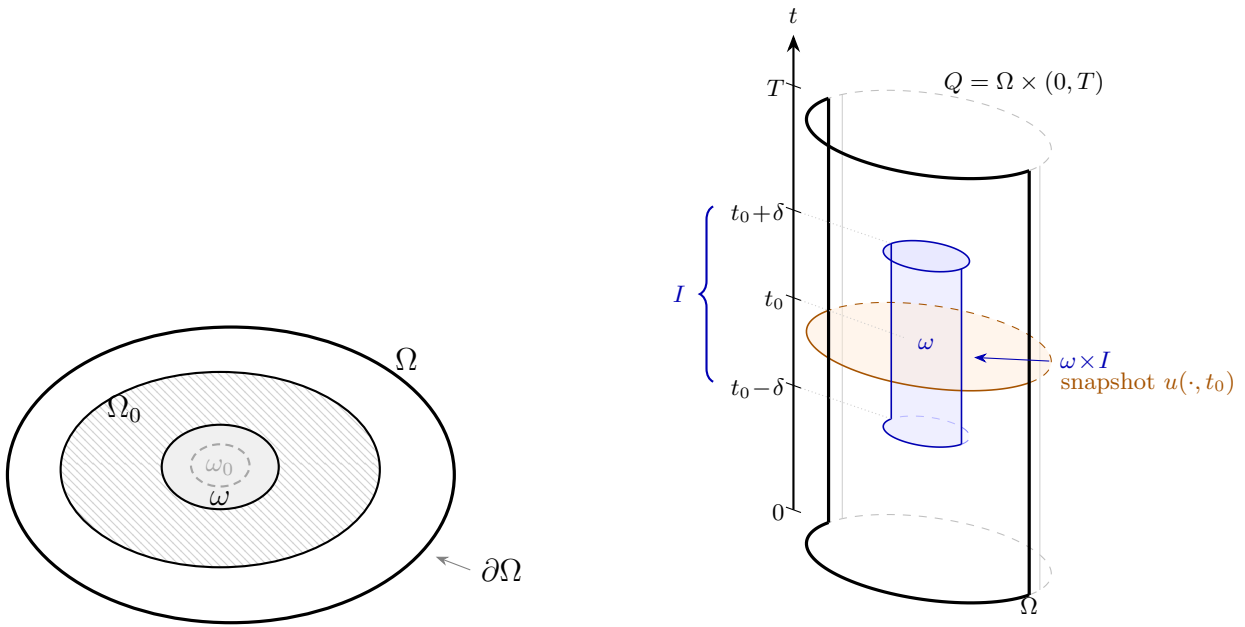
\begin{figure}[htbp]
\centering
\begin{subfigure}[t]{0.45\textwidth}
\centering
\begin{tikzpicture}[>=Stealth,thick,
 every node/.style={font=\normalsize},
 scale=0.7]

 \draw[very thick]
  (0,0) ellipse (4.2cm and 2.8cm);
 \node at (3.3,2.2) {$\Omega$};

 \draw[->,thin,gray] (4.5,-1.8) node[right,black]{$\partial\Omega$}
  -- (3.85,-1.55);

 \begin{scope}
  \clip (-0.2,0.1) ellipse (3.0cm and 1.85cm);
  \fill[pattern=north west lines, pattern color=gray!35]
  (-0.2,0.1) ellipse (3.0cm and 1.85cm);
 \end{scope}
 \begin{scope}
  \clip (-0.2,0.15) ellipse (1.1cm and 0.8cm);
  \fill[white] (-0.2,0.15) ellipse (1.1cm and 0.8cm);
 \end{scope}
 \draw[thick] (-0.2,0.1) ellipse (3.0cm and 1.85cm);
 \node at (-2.0,1.25) {$\Omega_0$};

 \fill[gray!12] (-0.2,0.15) ellipse (1.1cm and 0.8cm);
 \draw[thick] (-0.2,0.15) ellipse (1.1cm and 0.8cm);
 \node at (-0.2,-0.45) {$\omega$};

 \draw[densely dashed, gray!70]
  (-0.2,0.18) ellipse (0.55cm and 0.4cm);
 \node[gray!70,font=\footnotesize] at (-0.2,0.15) {$\omega_0$};


\end{tikzpicture}
\caption{Nested subdomains satisfying $\omega_0\subset \overline{\omega_0}
\subset \omega \subset \overline{\omega}\subset \Omega_0 
\subset \overline{\Omega_0} \subset\Omega$.}
\label{fig:domain}
\end{subfigure}
\hfill
\begin{subfigure}[t]{0.45\textwidth}
\centering
\begin{tikzpicture}[>=Stealth,thick,
  x={(0.88cm,-0.32cm)}, y={(0.88cm,0.32cm)}, z={(0cm,1cm)},
  scale=0.75]

 \def\rx{2.0} \def\ry{1.4}
 \def\H{7.5} 
 \def\tA{2.2} \def\tB{5.3} \def\tZ{3.75}
 \def\ox{-0.1} \def\oy{0.05}
 \def\orx{0.7} \def\ory{0.5}

 \draw[gray!50,thin,dashed]
  plot[domain=0:180,samples=60]
  ({\rx*cos(\x)},{\ry*sin(\x)},0);

 \draw[gray!40,very thin]
  ({\rx*cos(170)},{\ry*sin(170)},0)
  -- ({\rx*cos(170)},{\ry*sin(170)},\H);
 \draw[gray!40,very thin]
  ({\rx*cos(10)},{\ry*sin(10)},0)
  -- ({\rx*cos(10)},{\ry*sin(10)},\H);

 \draw[blue!50,thin,dashed]
  plot[domain=0:180,samples=40]
  ({\ox+\orx*cos(\x)},{\oy+\ory*sin(\x)},\tA);

 \fill[blue!12,opacity=0.50]
  plot[domain=180:360,samples=40]
  ({\ox+\orx*cos(\x)},{\oy+\ory*sin(\x)},\tA)
  -- plot[domain=360:180,samples=40]
  ({\ox+\orx*cos(\x)},{\oy+\ory*sin(\x)},\tB)
  -- cycle;
 \fill[blue!6,opacity=0.30]
  plot[domain=0:180,samples=40]
  ({\ox+\orx*cos(\x)},{\oy+\ory*sin(\x)},\tA)
  -- plot[domain=180:0,samples=40]
  ({\ox+\orx*cos(\x)},{\oy+\ory*sin(\x)},\tB)
  -- cycle;

 \fill[orange!15,opacity=0.50]
  plot[domain=0:360,samples=80]
  ({\rx*cos(\x)},{\ry*sin(\x)},\tZ)
  -- cycle;
 \draw[orange!65!black,thin,dashed]
  plot[domain=0:180,samples=60]
  ({\rx*cos(\x)},{\ry*sin(\x)},\tZ);
 \draw[orange!65!black,semithick]
  plot[domain=180:360,samples=60]
  ({\rx*cos(\x)},{\ry*sin(\x)},\tZ);
 \node[orange!65!black,font=\scriptsize,right]
  at ({\rx*1.1},0.2,\tZ)
  {snapshot $u(\cdot,t_0)$};

 \draw[blue!70!black,semithick]
  plot[domain=180:360,samples=40]
  ({\ox+\orx*cos(\x)},{\oy+\ory*sin(\x)},\tA);
 \fill[blue!18,opacity=0.40]
  plot[domain=0:360,samples=60]
  ({\ox+\orx*cos(\x)},{\oy+\ory*sin(\x)},\tB)
  -- cycle;
 \draw[blue!70!black,semithick]
  plot[domain=0:360,samples=60]
  ({\ox+\orx*cos(\x)},{\oy+\ory*sin(\x)},\tB);
 \draw[blue!70!black,semithick]
  ({\ox-\orx},{\oy},\tA) -- ({\ox-\orx},{\oy},\tB);
 \draw[blue!70!black,semithick]
  ({\ox+\orx},{\oy},\tA) -- ({\ox+\orx},{\oy},\tB);
 \node[blue!70!black,font=\scriptsize]
  at ({\ox},{\oy},{0.5*\tA+0.5*\tB}) {$\omega$};

 \draw[very thick]
  plot[domain=180:360,samples=60]
  ({\rx*cos(\x)},{\ry*sin(\x)},0);
 \draw[very thick] (-\rx,0,0)--(-\rx,0,\H);
 \draw[very thick] (\rx,0,0)--(\rx,0,\H);
 \draw[very thick]
  plot[domain=180:360,samples=60]
  ({\rx*cos(\x)},{\ry*sin(\x)},\H);
 \draw[gray!50,thin,dashed]
  plot[domain=0:180,samples=60]
  ({\rx*cos(\x)},{\ry*sin(\x)},\H);

 \draw[->,thick]
  (-\rx-0.7,0,0) -- (-\rx-0.7,0,{\H+0.9})
  node[above,font=\scriptsize]{$t$};
 \draw[thin] (-\rx-0.85,0,0)--(-\rx-0.55,0,0)
  node[left=2pt,font=\scriptsize]{$0$};
 \draw[thin] (-\rx-0.85,0,\tA)--(-\rx-0.55,0,\tA)
  node[left=2pt,font=\scriptsize]{$t_0\!-\!\delta$};
 \draw[thin] (-\rx-0.85,0,\tZ)--(-\rx-0.55,0,\tZ)
  node[left=2pt,font=\scriptsize]{$t_0$};
 \draw[thin] (-\rx-0.85,0,\tB)--(-\rx-0.55,0,\tB)
  node[left=2pt,font=\scriptsize]{$t_0\!+\!\delta$};
 \draw[thin] (-\rx-0.85,0,\H)--(-\rx-0.55,0,\H)
  node[left=2pt,font=\scriptsize]{$T$};

 \draw[decorate,
  decoration={brace,amplitude=5pt},
  blue!70!black,thick]
  (-\rx-1.6,-0.7,\tA) -- (-\rx-1.6,-0.7,\tB)
  node[midway,left=6pt,blue!70!black,font=\scriptsize]{$I$};

 \draw[gray!40,densely dotted,thin]
  (-\rx-0.55,0,\tA) -- ({\ox-\orx},{\oy},\tA);
 \draw[gray!40,densely dotted,thin]
  (-\rx-0.55,0,\tB) -- ({\ox-\orx},{\oy},\tB);
 \draw[gray!40,densely dotted,thin]
  (-\rx-0.55,0,\tZ) -- (-0.5,0,\tZ);

 \node[font=\scriptsize] at (0,{\ry+0.5},{\H+0.3})
  {$Q=\Omega\times(0,T)$};
 \node[font=\scriptsize] at (\rx+0.3,-0.3,0) {$\Omega$};
 \draw[<-,blue!70!black,thin]
  ({\ox+\orx+0.15},{\oy+0.1},{0.5*\tA+0.5*\tB})
  -- ({1.6},{0.8},{0.5*\tA+0.5*\tB})
  node[right,font=\scriptsize]{$\omega\!\times\! I$};

\end{tikzpicture}
\caption{Observation subcylinder $\omega\times I$ and snapshot $u(\cdot,t_0)|_\Omega$ in $Q$.}
\label{fig:cylinder}
\end{subfigure}

\caption{Domain geometry and measurements.}
\label{fig:domain-cylinder}
\end{figure}

Throughout all the cases \ref{M1} - \ref{M4}, we emphasize that data
$u(\cdot,t_0)|_\Om$ is indispensable for the recovery of $f$, because
the source $f(x)$ appears in the first equation
of \eqref{eq:forward}, and extracting $f$ from the equation at $t=t_0$
requires controlling $\p_t u$, $\Lap u$, and $uv^2$ at the observation time.
The second equation contains no unknown source; consequently, data 
$v(\cdot,t_0)|_\Om$ alone are insufficient unless combined with
additional data.
\\

This paper is organised as follows.
Section~\ref{sec:results} introduces the admissible class and states the main results.
Theorem~\ref{thm:reference} establishes the Lipschitz stability estimate, and
Theorem~\ref{thm:holder} gives the H\"older stability estimate.
The reduced-data results are stated as Corollaries~\ref{cor:red2}
--\ref{cor:red4}.
Section~\ref{sec:proof-lipschitz} is devoted to the proof of
Theorem~\ref{thm:reference} by means of a Carleman estimate with a singular weight.
Section~\ref{sec:proof-holder} proves Theorem~\ref{thm:holder} by using a
Carleman estimate with a regular weight.
Section 5 provides the proofs of Corollaries~\ref{cor:red2}--\ref{cor:red4}.
Section~\ref{sec:conclusion} concludes the paper with some further discussion.

\section{Main results}\label{sec:results}

For the statements of our results, we need to introduce function spaces.
First, we use the anisotropic Sobolev space
\[
H^{2,1}(Q)
:= L^2(0,T;H^2(\Omega))\cap H^1(0,T;L^2(\Omega)),
\]
equipped with the standard norm
\[
\|w\|_{H^{2,1}(Q)}
:=\|w\|_{L^2(0,T;H^2(\Omega))}+\|\partial_t w\|_{L^2(Q)}.
\]
This space is frequently used for strong solutions to parabolic equations and
systems (e.g., \cite{ladyzhenskaia1968linear,lunardi2012analytic}).
In particular, the trace mapping $w\mapsto w(\cdot,t)$ is well-defined for
$w\in H^{2,1}(Q)$, and such functions admit a representative in
$C([0,T];H^1(\Omega))$ (e.g., \cite{ladyzhenskaia1968linear}).
We also employ the space $W^{1,\infty}(Q)$ of essentially bounded functions 
with essentially bounded first-order derivatives.
Whenever pointwise expressions such as $uv^2$ are used, they are interpreted
through the a priori bounds in the admissible class introduced later.

We introduce the a priori class in which the inverse problem is studied.

\begin{definition}[Admissible class of solutions $(u,v)$]\label{def:U}
We arbitrarily fix a constant $M>0$.
We denote by $\mathcal U=\mathcal U(M)$ the set of triples $(u,v,f)$
such that:
\begin{enumerate}[label=(\roman*), leftmargin=2.5em]
\item $(u,v)\in H^{2,1}(Q)^2$ solves \eqref{eq:forward}--\eqref{eq:bc} in $Q$,
\item 
\[
u,v\in H^{2,1}(Q)\cap W^{1,\infty}(Q),\qquad
\p_t u,\;\p_t v\in H^{2,1}(Q),
\]
\item the source $f\in L^2(\Om)$ satisfies the bounds
\[
\norm{u}_{W^{1,\infty}(Q)}+\norm{v}_{W^{1,\infty}(Q)}
+\norm{f}_{L^2(\Om)}\le M.
\]
\end{enumerate}
\end{definition}

Henceforth we assume that the coefficients satisfy
\[
\sum_{j=1}^2\sum_{k=0}^1 \norm{\p_t^k a_j}_{L^\infty(Q)}\le M_0
\]
with some constant $M_0 > 0$.

We emphasize that we assume the existence of solutions $(u,v), 
(\www{u}, \www{v}) \in \mathcal{U}$.
There have been detailed results for the forward problem \eqref{eq:forward}--\eqref{eq:ic} and several methodologies have been established. See, e.g.,
\cite{amann1995linear, henry1981geometric, pierre2010global, smoller2012shock}.

By such classical results, we can describe conditions of $a_1, a_2, f$, boundary values $g, h$ and initial values $u_0, v_0$ in order that 
the solutions to the forward problem, belong to $\mathcal{U}$. However, since our main interest is the inverse problem, not pursuing the well-posedness of the forward problem, we focus on the stability within the admissible set $\mathcal{U}$.
Our method for the inverse problem can be adjusted to some extent for less 
regular class of $(u,v)$, but we do not treat other choices of the admissible
set.

The regularity assumptions in the admissible set $\mathcal{U}$ 
are necessary for the application of the Carleman estimates. Indeed, if the data $(u_0,v_0,g,h,f,a_1,a_2)$ are sufficiently smooth and satisfy the required compatibility conditions, then the corresponding solution $(u,v)$ enjoys higher regularity, in particular $\p_t u,\p_t v\in H^{2,1}(Q)$.
We refer to a standard parabolic regularity theory for the resulting linear system; see \cite[Chapter~IV, Theorem~9.1]{ladyzhenskaia1968linear}.
These assumptions can be partially relaxed, but we do not pursue this here.
\\

We are ready to state our main results for the inverse source problem of 
determining $f(x)$.


\begin{theorem}[Lipschitz stability with data $(\mathcal{M}_1)$]
\label{thm:reference}
We arbitrarily choose 
a subdomain $\omega$ satisfying $\ooo{\omega} \subset\Omega$ and 
a time interval $I:= (t_1, t_2)$ with $0<t_1<t_0<t_2<T$.
Let $(u,v,f)$ and $(\widetilde u,\widetilde v,\widetilde f)$ belong to
$\mathcal U(M)$ and solve \eqref{eq:forward}--\eqref{eq:bc} with the
same coefficients and boundary data.
There exists a constant $C>0$, depending only on
$\Omega,\omega,T,t_0,t_1,t_2,M,M_0$, such that
\begin{equation}\label{eq:stab-reference}
\|f-\widetilde f\|_{L^2(\Omega)}
\le C\Bigl(
\|u(\cdot,t_0)-\widetilde u(\cdot,t_0)\|_{H^2(\Omega)}
+\sum_{\ell=0}^1 \|\partial_t^\ell(u-\widetilde u)\|_{L^2(\omega\times I)}
+\sum_{\ell=0}^1 \|\partial_t^\ell(v-\widetilde v)\|_{L^2(\omega\times I)}
\Bigr).
\end{equation}
\end{theorem}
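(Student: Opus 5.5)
We follow the Bukhgeim--Klibanov method, applied to the linearized system for the differences. Set $y:=u-\widetilde u$, $z:=v-\widetilde v$, $F:=f-\widetilde f$. Subtracting the two systems and writing $uv^2-\widetilde u\widetilde v^2=\widetilde v^2 y+u(v+\widetilde v)z$ gives
\begin{equation*}
\begin{cases}
\Dt y = d_1\Lap y - (\widetilde v^2+a_1)y - u(v+\widetilde v)z + F(x), & \text{in } Q,\\
\Dt z = d_2\Lap z + \widetilde v^2 y + \bigl(u(v+\widetilde v)-a_2\bigr)z, & \text{in } Q,
\end{cases}
\end{equation*}
with $y=z=0$ on $\Sigma$. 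All coefficients lie in $W^{1,\infty}(Q)$, and their norms are bounded by a constant depending on $M,M_0$.

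Next we differentiate in time. Put $Y:=\Dt y$ and $Z:=\Dt z$, which lie in $H^{2,1}(Q)$ by Definition~\ref{def:U}(ii). The unknown $F$ is independent of $t$, so it drops out. We obtain a linear system for $(Y,Z)$ whose right-hand sides are $O(|Y|+|Z|+|y|+|z|)$, with bounded coefficients, and $Y=Z=0$ on $\Sigma$. The lower-order terms in $y,z$ are handled by writing $y(\cdot,t)=y(\cdot,t_0)+\int_{t_0}^t Y\,ds$, and similarly for $z$. At $t=t_0$ the first equation gives
\[
F=Y(\cdot,t_0)-d_1\Lap y(\cdot,t_0)+(\widetilde v^2+a_1)y(\cdot,t_0)+u(v+\widetilde v)z(\cdot,t_0).
\]
Hence
\[
\|F\|_{L^2(\Om)}\le \|Y(\cdot,t_0)\|_{L^2(\Om)}+C\|y(\cdot,t_0)\|_{H^2(\Om)}+C\|z(\cdot,t_0)\|_{L^2(\Om)}.
\]
The term $z(\cdot,t_0)$ is not data. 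It will be treated together with $Z$ via $z(\cdot,t_0)=z(\cdot,t)-\int_{t_0}^t Z\,ds$, or by including the undifferentiated system in the Carleman argument.

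We use an Imanuvilov-type Carleman estimate on $\Om\times I$ with the singular weight $\varphi=e^{\lambda\psi(x)}/((t-t_1)(t_2-t))$ and $\alpha=(e^{\lambda\psi}-e^{2\lambda\|\psi\|})/((t-t_1)(t_2-t))$. Here $\psi>0$ in $\Om$, $\psi=0$ on $\p\Om$, and $|\nabla\psi|>0$ outside $\omega_0\Subset\omega$. The weight blows up at $t_1,t_2$, so no boundary terms in time appear, and the interior observation is only needed on $\omega\times I$. We apply this estimate to $Y$, $Z$, $y$ and $z$ (four scalar parabolic equations) and sum. Taking $s$ large absorbs all zeroth-order couplings on the right-hand side. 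Cross terms such as $\int e^{2s\alpha}|\int_{t_0}^t Y|^2$ are absorbed by the standard lemma $\int_Q e^{2s\alpha}|\int_{t_0}^t Y\,ds|^2\le C\int_Q (t-t_0)^2 e^{2s\alpha}|Y|^2$. For the equations of $y$ we instead move $F$ to the right as a source; it contributes $\int e^{2s\alpha}|F|^2$, which is bounded by $C\int_\Om e^{2s\alpha(x,t_0)}|F|^2$ because $\alpha(x,t)\le\alpha(x,t_0)$ can be arranged (choose $t_0$ at the maximizing point, or shift/rescale the weight so its time maximum sits at $t_0$).

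Next we estimate the snapshot $Y(\cdot,t_0)$. We use
\[
\int_\Om e^{2s\alpha(x,t_0)}|Y(x,t_0)|^2dx=\int_{t_1}^{t_0}\!\!\int_\Om \Dt\bigl(e^{2s\alpha}|Y|^2\bigr)\,dx\,dt .
\]
The integrand is bounded by $C\int_{\Om\times I}(s\varphi^2 e^{2s\alpha}|Y|^2+e^{2s\alpha}|Y||\Dt Y|)$. Using $\Dt\alpha=O(\varphi^2)$, this is $\le Cs^{-1/2}$ times the Carleman left-hand side. Combining everything gives
\[
\int_\Om e^{2s\alpha(\cdot,t_0)}|F|^2\le Cs^{-1/2}\int_\Om e^{2s\alpha(\cdot,t_0)}|F|^2 + Ce^{Cs}\bigl(\text{data}\bigr)^2.
\]
We fix $s$ large to absorb the first term, and then remove the weight, which is bounded above and below on $\Om$ at $t=t_0$.

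The main obstacle is the coupling through the unobserved snapshot $z(\cdot,t_0)$ and the undifferentiated $z$. Unlike $y(\cdot,t_0)$, these are not measured globally. They must be controlled by Carleman terms that carry a factor $s^{-1}$ relative to $\int e^{2s\alpha(t_0)}|F|^2$, and this requires carefully matching the weights of $z$, $Z$ and $F$. If the singular weight makes this mismatch problematic, a fallback is to estimate $z$ in $L^2(\Om\times I)$ by the Carleman estimate for the $z$-equation driven by $y$, with $\omega$-data. Observation of $v$ on $\omega\times I$ is used precisely here, and of $\Dt v$ through $Z$.
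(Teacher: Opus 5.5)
Your outline is the paper's proof. It uses the difference system (your factorisation $\widetilde v^2y+u(v+\widetilde v)z$ is an equivalent alternative to the paper's $v^2y+\widetilde u(v+\widetilde v)z$). It applies the singular-weight Carleman estimate to the four functions $y,z,\partial_t y,\partial_t z$ and sums, keeping $F$ as a source only in the $y$-equation. It then uses the identity $\int_\Omega|\partial_t y(x,t_0)|^2e^{2s\alpha(x,t_0)}dx=\int_{t_0-\delta}^{t_0}\int_\Omega\partial_t(|\partial_t y|^2e^{2s\alpha})\,dx\,dt$ and absorbs $F$ at $t_0$. On the centering issue, the paper makes your ``shift'' concrete. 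It replaces $I$ by $(t_0-\delta,t_0+\delta)$ with $0<\delta<\min\{t_0-t_1,t_2-t_0\}$, which only decreases the data norms, and takes $\theta(t)=(t-t_0+\delta)(t_0+\delta-t)$. Then $\alpha(x,t)\le\alpha(x,t_0)$. ``Choosing $t_0$ at the maximizing point'' is not available, since $t_0$ is prescribed by the data.

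The step you leave open, and call the main obstacle, is the weighted bound on $z(\cdot,t_0)$. Your final inequality silently presupposes it. It follows from the same device you already use for $\partial_t y(\cdot,t_0)$; this is Step 3 of the paper. Since $e^{2s\alpha}\to0$ at $t_0-\delta$,
\[
\begin{aligned}
\int_\Omega|z(x,t_0)|^2e^{2s\alpha(x,t_0)}\,dx
&=\int_{t_0-\delta}^{t_0}\!\!\int_\Omega\bigl(2z\,\partial_t z+2s(\partial_t\alpha)|z|^2\bigr)e^{2s\alpha}\,dx\,dt\\
&\le\frac{C}{s}\int_{\Omega\times I}s^3\varphi^3\bigl(|z|^2+|\partial_t z|^2\bigr)e^{2s\alpha}\,dx\,dt .
\end{aligned}
\]
Here you use $2|z\,\partial_t z|\le|z|^2+|\partial_t z|^2$, $|\partial_t\alpha|\le C\varphi^2$ and $\varphi\ge\delta^{-2}$. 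Both terms on the right already sit on the left of your summed four-function Carleman estimate. Hence they are bounded by $C\int_{\Omega\times I}|F|^2e^{2s\alpha}+Ce^{Cs}(\text{data})^2$, and the factor $s^{-1}$ permits absorption exactly as for $\partial_t y(\cdot,t_0)$. No further ``matching of weights'' is needed.

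Your fallback would not do the job. An unweighted $L^2(\Omega\times I)$ bound on $z$ in terms of $y$ and $\omega$-data carries no small factor in front of $\|F\|^2$, so it cannot be absorbed.

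Two minor points. The representation $y=y(\cdot,t_0)+\int_{t_0}^t\partial_t y$ is superfluous here; it is the device of the paper's H\"older proof, where $y$ is not put into the Carleman sum. Also, the ``standard lemma'' with factor $(t-t_0)^2$ is false as stated: take $Y$ supported in $[t_0,t_0+\epsilon]$ and let $\epsilon\to0$. The correct version has a constant depending on $\delta$ in place of $(t-t_0)^2$.
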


Theorem~\ref{thm:reference} establishes the Lipschitz stability over the whole spatial domain $\OOO$ in determining $f(x)$. A standard inverse source problem for a scalar parabolic equation of the form $\p_t w-d\Lap w+a\,w=f(x)$ requires, at minimum, a spatial snapshot $w(\cdot,t_0)|_\Om$ together with time-resolved interior observations $w|_{\omega\times I}$; see e.g., \cite{imanuvilov1998lipschitz,Y25}. For the coupled system \eqref{eq:forward}, the source $f(x)$ appears only in the first (water) equation. A natural observation data set therefore consists of: a full-domain snapshot of $u$ at the observation time $t_0$ for extracting the source via the equation, and time-resolved observations of both $u$ and $v$ on the subdomain $\omega\times I$ for controlling the coupled dynamics through the Carleman estimate.

The next question is \emph{data-reduction}:
\emph{can some of these observation data be reduced to preserve the stability?}

This question is of both theoretical and practical interest.
Theoretically, it reveals which structural features of the nonlinear coupling carry identifiability information. Practically, it addresses the common situation in ecological monitoring where observations of one component (e.g., vegetation via remote sensing) are available but measurements of the other (e.g., soil moisture) are not.

Our approach to data reduction is similar to the strategy of
\cite{benabdallah2009inverse, cristofol2006inverse}, where data of a single component suffice for determining coefficients in some linear systems with two components. We reduce the data set through the quadratic coupling term $uv^2$ of the two equations in \eqref{eq:forward}, under a positivity condition, observing that one component on $\omega\times I$ determines the other through the forward equations.
Below, we consider other data sets $(\mathcal{M}_2) - (\mathcal{M}_4)$, and state only the uniqueness results. The corresponding stability results are derived readily, but, in order to clarify the role of these reduced data relative to the data in Theorem~\ref{thm:reference}, we restrict ourselves here to the uniqueness.

\begin{corollary}[Uniqueness via $\mathcal{M}_2$: observation of $v$ only]
\label{cor:red2}
Let $(u,v,f)$ and $(\tilde u,\tilde v,\tilde f)$ belong to 
$\mathcal{U}(M)$ and solve \eqref{eq:forward}--\eqref{eq:bc} with the same coefficients $a_1, a_2 \in C(\ooo Q)$, and boundary data. Moreover, we further assume
$u \in C(\ooo\OOO \times \ooo{I})$ and 
$v \in C(\ooo\OOO \times \ooo{I}) \cap C^2(\OOO \times I)$,
and 
\begin{equation}
\label{eq:5}
v_0 \ge 0 \quad \text{in } \OOO, \quad \text{and}\quad
h \ge 0 \quad \text{on } \Sigma  
\end{equation}
and
\begin{equation}
\label{eq:6}
v_0 \not\equiv 0 \quad \text{in } \OOO. 
\end{equation}
If
$$
u(\cdot,t_0) = \www{u}(\cdot,t_0) \quad \mbox{in $\OOO$} \quad 
\mbox{and} \quad v\vert_{\omega \times I}= \www v\vert_{\omega \times I},
$$
then $f=\widetilde f$ in $\Omega$.
\end{corollary}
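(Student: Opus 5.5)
The plan is to reduce the statement to Theorem~\ref{thm:reference}. Under the hypotheses, all data terms in \eqref{eq:stab-reference} vanish except possibly $\|\p_t^\ell(u-\www u)\|_{L^2(\omega\times I)}$. It therefore suffices to show $u=\www u$ in $\omega\times I$; then $\p_t(u-\www u)=0$ there as well, and Theorem~\ref{thm:reference} gives $f=\www f$.

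\textbf{Step 1: positivity of $v$.} I will show that $v>0$ in $\OOO\times I$. Write the second equation of \eqref{eq:forward} as the linear equation
\[
\p_t v - d_2\Lap v + c(x,t)v = 0,\qquad c:=a_2-uv,
\]
where $c\in L^\infty(Q)$ by the admissible bounds. The boundary condition \eqref{eq:5} gives $h\ge0$ on $\Sigma$ and $v_0\ge0$ in $\OOO$, so the weak maximum principle yields $v\ge0$ in $Q$. Since $v_0\not\equiv0$ by \eqref{eq:6}, the strong maximum principle for this parabolic operator with bounded zeroth-order coefficient gives $v>0$ in $\OOO\times(0,T]$. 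The extra regularity $v\in C^2(\OOO\times I)\cap C(\ooo\OOO\times\ooo I)$ is what makes the classical strong maximum principle applicable on $I$; the nonnegativity on all of $Q$ can be obtained from the $H^{2,1}$ weak maximum principle, for instance by testing with $v^-$. The same argument applies to $\www v$, but only the positivity of $v$ will be needed.

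\textbf{Step 2: recovering $u$ on $\omega\times I$.} On $\omega\times I$ we have $v=\www v$. Subtracting the two second equations there, the diffusion, time-derivative and $a_2$ terms cancel, leaving
\[
(u-\www u)v^2=0 \quad\text{in } \omega\times I.
\]
Since $v>0$ there by Step 1, it follows that $u=\www u$ in $\omega\times I$. Hence $\p_t(u-\www u)=0$ in $\omega\times I$ as well.

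\textbf{Step 3: conclusion.} All right-hand side terms of \eqref{eq:stab-reference} now vanish, so Theorem~\ref{thm:reference} yields $f=\www f$ in $\OOO$.

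The main obstacle is Step 1: applying the strong maximum principle rigorously for the weak-regularity coefficient $c=a_2-uv$, and handling the time interval, since strict positivity is needed on $I$ and not at $t=0$. I would first derive nonnegativity on all of $Q$ by an energy argument with $v^-$. For strict positivity, I would argue by contradiction: if $v(x_1,t_1')=0$ at some interior point with $t_1'\in I$, then the strong maximum principle applied to $e^{-\lambda t}v$ with large $\lambda$ (so that the zeroth-order coefficient becomes nonnegative) forces $v\equiv0$ on $\OOO\times(0,t_1']$. Letting $t\to0$ would then give $v_0\equiv0$, contradicting \eqref{eq:6}; the continuity of $v$ in time, from $H^{2,1}(Q)\subset C([0,T];H^1(\OOO))$, justifies this limit.
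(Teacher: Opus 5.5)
Your proposal is correct and follows essentially the same route as the paper: you get positivity of $v$ on $\omega\times I$ from the weak and strong maximum principles (as in Lemma~\ref{lem:pos}(i)), you subtract the $v$-equations to obtain $(u-\www u)v^2=0$ on $\omega\times I$ and hence $u=\www u$ there, and you conclude with Theorem~\ref{thm:reference} since all data terms vanish. The only minor difference is that you use pointwise positivity of $v$ rather than the paper's uniform lower bound $m_0$ on $\ooo\omega\times\ooo I$, which is sufficient for uniqueness.
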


\begin{corollary}[Uniqueness via $\mathcal{M}_3$: observation of $u$ 
with $f|_\omega$ given]
\label{cor:red3}
In addition to Corollary 2.1, we further assume 
$u \in C(\ooo\OOO \times \ooo{I}) \cap C^2(\OOO \times I)$, 
$v \in C(\ooo\OOO \times \ooo{I})$, and $f \ge 0$ in $\OOO$, $f\vert_{\omega}
= \www f\vert_{\omega}$, and
\begin{equation}
\label{eq:7}
u_0 \ge 0 \quad \text{in } \OOO, \quad \text{and}\quad
g \ge 0 \quad \text{on } \Sigma  
\end{equation}
and
\begin{equation}
\label{eq:8}
u_0 \not\equiv 0 \quad \text{in }\OOO. 
\end{equation}
If
$$
u(\cdot,t_0) = \www{u}(\cdot,t_0) \quad \mbox{in $\OOO$} \quad 
\mbox{and} \quad u\vert_{\omega \times I}= \www u\vert_{\omega \times I},
$$
then $f=\widetilde f$ in $\Omega$.

\end{corollary}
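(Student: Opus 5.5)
The plan is to reduce Corollary~\ref{cor:red3} to the uniqueness part of Theorem~\ref{thm:reference}. It suffices to show that $v=\www v$ on $\omega\times I$, or on a smaller cylinder $\omega_0\times I_0$ with $\ooo{\omega_0}\subset\omega$ and $t_0\in I_0\subset I$. Indeed, then all data in \ref{M1} coincide on that cylinder, and \eqref{eq:stab-reference} applied with $\omega_0,I_0$ gives $f=\www f$.

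\textbf{Step 1: positivity of $u$.} We write the first equation as a linear parabolic equation
\[
\Dt u - d_1\Lap u + (v^2+a_1)u = f \ge 0 .
\]
The zeroth-order coefficient $v^2+a_1$ is bounded, and by \eqref{eq:7} we have $u_0\ge0$ and $g\ge0$. The weak maximum principle then gives $u\ge 0$ in $Q$. Since $u_0\not\equiv0$ by \eqref{eq:8}, the strong maximum principle (Harnack inequality) yields $u>0$ in $\Omega\times(0,T]$. By the continuity of $u$ on $\ooo\Omega\times\ooo I$, this gives $u\ge c_0>0$ on $\ooo\omega\times\ooo I$. The same holds for $\www u$, because $\www f=f\ge0$ on $\omega$ and $\www u=u$ there. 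More carefully, I only need positivity of the common function $u=\www u$ on $\omega\times I$, so the argument for $\www u$ is not needed.

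\textbf{Step 2: recovering $v$ on $\omega\times I$.} On $\omega\times I$ we have $u=\www u$ and $f=\www f$. The first equation holds pointwise, since $u\in C^2(\Omega\times I)$. Subtracting the equations for $u$ and $\www u$ gives
\[
u\,(v^2-\www v^2)=0 \quad\text{in }\omega\times I .
\]
Since $u>0$ there, it follows that $v^2=\www v^2$, that is, $(v-\www v)(v+\www v)=0$.

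\textbf{Step 3: removing the sign ambiguity (main obstacle).} To conclude $v=\www v$ we need the sign of $v$ and $\www v$. Corollary~\ref{cor:red3} is stated ``in addition to Corollary~\ref{cor:red2}'', so the hypotheses \eqref{eq:5}--\eqref{eq:6} are available. The second equation is
\[
\Dt v - d_2\Lap v + (a_2-uv)v=0,
\]
which is linear in $v$ with a bounded coefficient. The maximum principle with $v_0\ge0$ and $h\ge0$ gives $v\ge0$. The strong maximum principle with $v_0\not\equiv0$ then gives $v>0$ in $\Omega\times(0,T]$, and likewise $\www v>0$ (assuming $\www v$ has the same initial and boundary data, which is implicit in the phrase ``same boundary data'').

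Then $v+\www v>0$, hence $v=\www v$ on $\omega\times I$, and with $u=\www u$ there and $u(\cdot,t_0)=\www u(\cdot,t_0)$, Theorem~\ref{thm:reference} yields $f=\www f$.

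The delicate point is the justification of the strong maximum principle for $H^{2,1}$ solutions with merely bounded coefficients. If the sign hypothesis on $\www v$ is not available, I would instead use nonnegativity alone: $v,\www v\ge0$ together with $v^2=\www v^2$ already forces $v=\www v$. Thus only the weak maximum principle is needed for $v$, while the strong one is needed only for $u$. For $u$ I would invoke it through the Harnack inequality for weak supersolutions with bounded coefficients.
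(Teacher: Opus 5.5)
Your proposal is correct and follows essentially the same route as the paper. Subtracting the $u$-equations on $\omega\times I$ gives $u(v^2-\widetilde v^2)=0$, and positivity of $u$ there comes from the weak and strong maximum principles (Lemma~\ref{lem:pos}(ii)). Nonnegativity of $v$ and $\widetilde v$ from \eqref{eq:5} removes the sign ambiguity, and the recovered $\mathcal{M}_1$ data then feed into Theorem~\ref{thm:reference}.

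Two small remarks. First, as in the paper, the sign conditions \eqref{eq:5} must be assumed for both triples; they do not follow from ``same boundary data'', since the initial data may differ. Second, your observation that only the weak maximum principle is needed for $v$ and $\widetilde v$ is exactly what the paper's argument actually uses.
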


\begin{corollary}[Uniqueness via $\mathcal{M}_4$: two snapshots with 
observation of $u$]
\label{cor:red4}
In addition to Corollary 2.1, we further assume 
$u, v \in C(\ooo\OOO \times \ooo{I}) \cap C^2(\OOO \times I)$,
$f \ge 0$ in $\OOO$, and  \eqref{eq:5} - \eqref{eq:8}.
If 
$$
u(\cdot,t_0) = \www{u}(\cdot,t_0),\,\, 
v(\cdot,t_0) = \www{v}(\cdot,t_0) \quad \mbox{in $\OOO$} \quad 
\mbox{and} \quad u\vert_{\omega \times I}= \www u\vert_{\omega \times I},
$$
then $f=\widetilde f$ in $\Omega$.
\end{corollary}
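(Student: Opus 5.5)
Under the hypotheses of Corollary~\ref{cor:red4}, the plan is to reduce the data $(\mathcal{M}_4)$ to the reference data $(\mathcal{M}_1)$ and then apply Theorem~\ref{thm:reference} with zero right-hand side. Since $u(\cdot,t_0)=\www u(\cdot,t_0)$ in $\OOO$ and $u=\www u$ on $\omega\times I$ are given, it suffices to show $v=\www v$ on $\omega\times I$. Then all terms on the right of \eqref{eq:stab-reference} vanish, because $\p_t(u-\www u)=0$ on $\omega\times I$ follows from the equality there, and we obtain $f=\www f$.

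To recover $v$ on $\omega\times I$, we use the first equation on $\omega\times I$. Set $y:=u-\www u$, $z:=v-\www v$, $F:=f-\www f$. On $\omega\times I$ we have $y=0$, so subtracting the first equations gives
\[
u\,(v^2-\www v^2) = F(x),\qquad\text{i.e.}\qquad u\,(v+\www v)\,z = F(x)\quad\text{in }\omega\times I .
\]
Next we invoke the strong maximum principle. Since $u_0\ge0$, $g\ge0$, $f\ge0$, $u_0\not\equiv0$, and the first equation can be written as $\p_t u-d_1\Lap u+(v^2+a_1)u=f\ge0$ with bounded zeroth-order coefficient, we get $u>0$ in $\OOO\times(0,T)$. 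Similarly, the second equation $\p_t v-d_2\Lap v+(a_2-uv)v=0$ with \eqref{eq:5}, \eqref{eq:6} gives $v>0$. The same holds for $\www u,\www v$ under the same assumptions, which are taken to apply to both triples. The regularity $C^2(\OOO\times I)$ together with continuity justifies the classical maximum principle. Hence $u(v+\www v)>0$ in $\omega\times I$, and
\[
z(x,t)=\frac{F(x)}{u(x,t)\,(v+\www v)(x,t)},\qquad (x,t)\in\omega\times I .
\]

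The main obstacle is forcing $F=0$ on $\omega$, and this is where the snapshot $v(\cdot,t_0)=\www v(\cdot,t_0)$ enters. At $t=t_0$ we have $z(\cdot,t_0)=0$ in $\OOO$, so evaluating the identity above at $t_0\in I$ gives $F(x)=u(x,t_0)(v+\www v)(x,t_0)\,z(x,t_0)=0$ on $\omega$. Continuity in $t$ of $u,v,\www v$ on $\ooo\omega\times\ooo I$ makes this pointwise evaluation legitimate, and for a.e.\ $x$ the identity holds for all $t$ by continuity. Consequently $z\equiv0$ on $\omega\times I$, the data coincide with $(\mathcal{M}_1)$-data, and Theorem~\ref{thm:reference} yields $f=\www f$ in $\OOO$. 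A minor check needed along the way is that $F\in L^2$ need not be continuous; this is handled because $F(x)=u(v^2-\www v^2)$ holds as an identity with continuous right-hand side on $\omega\times I$, so $F$ has a continuous representative on $\omega$.
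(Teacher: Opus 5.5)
Your proposal is correct and follows the paper's proof essentially step by step. You restrict the first difference equation to $\omega\times I$ to get $F=\widetilde u\,(v+\widetilde v)\,z$ there; your form $u(v+\widetilde v)z=F$ is the same identity because $u=\widetilde u$ on $\omega\times I$. You evaluate it at $t_0$ with $z(\cdot,t_0)=0$ to obtain $F=0$ on $\omega$, use the maximum-principle positivity of $u,v,\widetilde v$ (the paper's Lemma~\ref{lem:pos}) to deduce $z=0$ on $\omega\times I$, and finish with Theorem~\ref{thm:reference}; your remark about justifying the pointwise evaluation at $t=t_0$ by continuity is a detail the paper leaves implicit.
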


So far, assuming the boundary condition \eqref{eq:bc} on the whole 
$\ppp\OOO \times 
(0,T)$, we discuss the Lipschitz stability in the whole spatial 
domain $\OOO$. 
On the other hand, in practice, we do not often know complete 
boundary values on the whole $\ppp\OOO \times (0,T)$, 
but only partial boundary data. In that case, we can prove a H\"older stability estimate under suitable a priori boundedness conditions.

Let $\omega\subset\Omega$ with $\overline{\omega}\subset\Omega$ be fixed, and let
$\Omega_0\subset\Omega$ satisfy
\[
\overline{\omega}\subset\Omega_0\subset\overline{\Omega}_0\subset\Omega.
\]
Fix
\[
0<\delta_0<\min\{t_0,T-t_0\},
\qquad
I_0:=(t_0-\delta_0,\ t_0+\delta_0).
\]
Define
\begin{equation}\label{eq:B-def}
\mathcal{B}:=\sum_{k=0}^2\Bigl(
\norm{\p_t^k(u-\widetilde u)}_{L^2(\omega\times I_0)}^2
+\norm{\p_t^k(v-\widetilde v)}_{L^2(\omega\times I_0)}^2\Bigr)
+\norm{u(\cdot,t_0)-\widetilde u(\cdot,t_0)}_{L^2(\Om)}^2.
\end{equation}
For an arbitrarily chosen constant $\mathcal{N}>0$, we assume 
\begin{equation}\label{eq:N-def}
\sum_{k=0}^1\Bigl(
\norm{\nabla_{x,t}\p_t^k y}_{L^2(\p\Om\times I_0)}^2
+\norm{\nabla_{x,t}\p_t^k z}_{L^2(\p\Om\times I_0)}^2
\end{equation}
$$
+\norm{\nabla_x\p_t^k y(\cdot,t_0\pm\delta_0)}_{L^2(\Om)}^2
+\norm{\nabla_x\p_t^k z(\cdot,t_0\pm\delta_0)}_{L^2(\Om)}^2
\Bigr) \le \mathcal{N}.
$$
We interpret $\mathcal{N}$ as an a priori bound of solutions of $(u,v,f),
(\widetilde u,\widetilde v,\widetilde f)\in\mathcal{U}(M)$.

\begin{theorem}[H\"older stability on interior subdomains]
\label{thm:holder}
Let a subdomain $\Om_0$ satisfy 
$\overline\omega\subset\Om_0\subset\overline{\Om}_0\subset\Om$,
and let $(u,v,f),(\widetilde u,\widetilde v,\widetilde f)\in\mathcal{U}(M)$
solve \eqref{eq:forward}. 
Then, there exist constants
$C=C(M,M_0,t_0,\Om,\Om_0,\omega,T,\delta_0)>0$ and
$\theta=\theta(M,M_0,t_0,\Om,\Om_0,\omega,T,\delta_0)\in(0,1)$ such that
\begin{equation}\label{eq:stab-holder}
\norm{f-\tilde f}_{L^2(\Om_0)}\le C\,\mathcal{B}^{\theta}.
\end{equation}
\end{theorem}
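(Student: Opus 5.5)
We follow the Bukhgeim--Klibanov scheme, but with a regular (non-singular-in-time) Carleman weight that is localized in space and time, so that no boundary data on $\p\Om$ enter except through the a priori bound $\mathcal N$. Set $y:=u-\widetilde u$, $z:=v-\widetilde v$, $F:=f-\widetilde f$. Subtracting the two systems and writing $uv^2-\widetilde u\widetilde v^2 = \widetilde v^2 y + u(v+\widetilde v) z$, we obtain the linear system
\begin{align*}
\p_t y - d_1\Lap y &= -\widetilde v^2 y - u(v+\widetilde v)z - a_1 y + F(x),\\
\p_t z - d_2\Lap z &= \widetilde v^2 y + u(v+\widetilde v)z - a_2 z,
\end{align*}
whose coefficients are bounded in $W^{1,\infty}$ by constants depending on $M,M_0$. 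Differentiating in $t$ and setting $Y:=\p_t y$, $Z:=\p_t z$ eliminates $F$. The resulting system for $(Y,Z)$ has zero-order couplings in $(Y,Z)$ and lower-order source terms in $(y,z)$, which are controlled through $|y(x,t)|\le |y(x,t_0)|+|\int_{t_0}^t Y\,ds|$ and the analogous bound for $z$. At $t=t_0$ the first equation gives $F = Y(\cdot,t_0) - d_1\Lap y(\cdot,t_0) + (\text{zero-order terms in } y(\cdot,t_0),\, z(\cdot,t_0))$. The $z(\cdot,t_0)$ term is not observed, so it must be absorbed via $z(\cdot,t_0)=z(\cdot,t_0\pm\delta)\mp\int Z$ inside the Carleman argument. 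Since $\mathcal B$ only contains $y(\cdot,t_0)$ in $L^2$, the term $\Lap y(\cdot,t_0)$ should be handled by testing against the weight and integrating by parts, or by interpolation with the $H^2$ a priori bound from $\mathcal U(M)$. The latter costs a Hölder power, which is acceptable here.

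Next we choose the weight. Take $\psi\in C^2(\ooo\Om)$ with $\psi>0$ in $\Om$ and $|\nabla\psi|>0$ on $\ooo{\Om\setminus\omega_0}$ (Imanuvilov's function), and set $\varphi(x,t)=e^{\lambda(\psi(x)-\beta(t-t_0)^2)}$ on $\Om\times I_0$ with $\beta$ large. Define levels $\mu_1<\mu_2$ so that $\{\varphi>\mu_2\}\supset \Om_0\times\{t_0\}$, while near $\p\Om$ and near $t=t_0\pm\delta_0$ one has $\varphi<\mu_1$. Let $\chi$ be a cutoff equal to $1$ on $\{\varphi>\mu_2\}$ and $0$ on $\{\varphi<\mu_1\}$. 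We apply the regular-weight Carleman estimate for parabolic systems with zero-order coupling (the estimate for $\chi Y$, $\chi Z$ with observation in $\omega\times I_0$, stated for each scalar equation and summed, with $s$ large absorbing the couplings) to $\chi Y,\chi Z$, together with the standard energy identity $\int_\Om |Y(x,t_0)|^2 e^{2s\varphi(x,t_0)}dx = \int_{t_0-\delta_0}^{t_0}\p_t(\cdots)\,dt$. This bounds $\int_{\Om}\chi^2|F|^2e^{2s\varphi(\cdot,t_0)}$ by three contributions. The first is the Carleman-dominated terms, absorbed for $s$ large; this requires the weight to be maximal at $t=t_0$ in time, which the choice $-\beta(t-t_0)^2$ ensures. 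The second is the observation terms $Ce^{Cs}\mathcal B$, which need $\p_t^k$ up to $k=2$ since $\p_t Y=\p_t^2 y$. The third is commutator terms supported where $\nabla\chi\neq0$, i.e. in $\{\varphi\le\mu_2\}$; together with the boundary and end-time terms these are bounded by $Ce^{2s\mu_2}(M^2+\mathcal N)$.

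Restricting the left-hand side to $\Om_0$, where $\varphi(\cdot,t_0)\ge\mu_3>\mu_2$, we get
\[
\|F\|_{L^2(\Om_0)}^2\le C e^{Cs}\mathcal B + C e^{-2s(\mu_3-\mu_2)}(M^2+\mathcal N)\qquad\text{for all } s\ge s_0 .
\]
Optimizing over $s$, or taking $s=s_0$ when $\mathcal B$ is large, gives $\|F\|_{L^2(\Om_0)}\le C\mathcal B^\theta$ with $\theta=(\mu_3-\mu_2)/(C+2(\mu_3-\mu_2))$. We expect the main obstacle to be the weight geometry: we need a single $\varphi$ whose super-level set contains $\Om_0\times\{t_0\}$, whose low level set contains the parabolic boundary of $\Om\times I_0$, and whose gradient condition holds outside $\omega$. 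We will first try the $\psi-\beta(t-t_0)^2$ construction with $\psi$ normalized so that $\min_{\ooo{\Om_0}}\psi$ exceeds $\max_{\p\Om}\psi$ by a fixed margin, which is possible since $\ooo{\Om_0}\subset\Om$ and $\psi=0$ on $\p\Om$. The second delicate point is controlling the unobserved $z(\cdot,t_0)$ and $\Lap y(\cdot,t_0)$ terms in the formula for $F$. We plan to absorb $z(\cdot,t_0)$ via the Carleman bound on $Z$ weighted by $e^{2s\varphi}$, using the usual $\int|\int_{t_0}^t Z|^2e^{2s\varphi}\le \frac Cs\int|Z|^2e^{2s\varphi}$ lemma.
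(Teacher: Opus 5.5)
\textbf{There is a genuine gap, at exactly the step you flagged: controlling $\Lap y(\cdot,t_0)$ when $\mathcal B$ contains $y(\cdot,t_0)$ only in $L^2(\Om)$.} Neither of your remedies works.
\begin{itemize}
\item The class $\mathcal U(M)$ gives no quantitative $H^2$ bound, only $W^{1,\infty}$ bounds on $u,v$. Even with such a bound, interpolating $L^2$-smallness against $H^2$-boundedness gives smallness in $H^\sigma$ only for $\sigma<2$, never in $H^2$.
\item Integrating by parts against $\chi^2Fe^{2s\varphi}$ moves the derivatives onto $F\in L^2(\Om)$. At best you then control $F$ in a negative norm, with no way back to $L^2$.
\end{itemize}

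In fact no argument can close this step. Take $n=1$, $\Om=(0,\pi)$, $a_1=a_2=0$, $v=\widetilde v=0$, $\widetilde u=\widetilde f=0$, and the stationary pair $u=m^{-2}\sin(mx)$, $f=d_1\sin(mx)$.
\begin{itemize}
\item For every $m$, both triples lie in $\mathcal U(M)$ with $M=2+d_1\sqrt{\pi/2}$.
\item One has $\mathcal N=O(m^{-2})$ and $\mathcal B=O(m^{-4})$.
\item Yet $\|f-\widetilde f\|_{L^2(\Om_0)}\to d_1\sqrt{|\Om_0|/2}>0$.
\end{itemize}
So your final inequality for $\|F\|^2_{L^2(\Om_0)}$ is false, and so is \eqref{eq:stab-holder} with $\mathcal B$ as in \eqref{eq:B-def}. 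The data must include $\|u(\cdot,t_0)-\widetilde u(\cdot,t_0)\|_{H^2(\Om)}$, or at least $\|\Lap(u-\widetilde u)(\cdot,t_0)\|_{L^2(\Om_0)}$, as in Theorem~\ref{thm:reference}.

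The paper's own proof passes over the same point. The $k=0$ terms $\p_i\p_j y$ on the left of \eqref{eq:holder-carleman-full}, later used to bound $\Lap y(\cdot,t_0)$, can only come from $y=y_0+\int_{t_0}^t y_1\,d\eta$. That requires control of $\p_i\p_j y_0$, which $\mathcal B$ does not contain.

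Apart from this, your plan follows the paper: the same weight $e^{\lambda(d(x)-\beta(t-t_0)^2)}$, elimination of $F$ by differentiating in $t$, and the relation $y=y(\cdot,t_0)+\int_{t_0}^t\p_t y$.

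\textbf{There is a second, repairable gap: your $(Y,Z)$-system is not closed.} Differentiation produces $\p_t\bigl(u(v+\widetilde v)\bigr)z$ in the $Y$-equation, and a counterpart in the $Z$-equation. Writing $z=z(\cdot,t_0)+\int_{t_0}^tZ$ then brings in the unobserved $z(\cdot,t_0)$. Your proposed ways of removing it fail.
\begin{itemize}
\item Lemma~\ref{lem:integral} controls only $z-z(\cdot,t_0)$, not $z(\cdot,t_0)$ itself.
\item The identity $z(\cdot,t_0)=z(\cdot,t_0\pm\delta)\mp\int Z$, once weighted by $e^{2s\varphi(x,t_0)}$, yields integrals of $|Z|^2$ and $|\p_t\chi|^2|z|^2$ carrying the weight at time $t_0$. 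That weight exceeds $e^{2s\varphi(x,t)}$ by a factor exponential in $s$, so these integrals are not dominated by the Carleman left-hand side.
\end{itemize}

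The missing idea is the paper's. The $z$-equation contains no $F$, so apply the Carleman estimate to the three functions $(\p_t y,\p_t z,z)$, and treat only $y$ through the integral relation. The coupling terms in $z$ are then absorbed for large $s$. The value $z(\cdot,t_0)$ on $\Om_0$ follows from the $H^1$-in-time bound on $\Om_0\times(t_0-r,t_0+r)$.

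With this change, your cutoff $\chi$ is a legitimate substitute for the paper's boundary-term estimate (Lemma~\ref{lem:carleman-reg}, whose extra terms cost $e^{2s\rho_1}\mathcal N$). Note also that once $F$ is eliminated there is nothing to absorb at $t_0$. The Bukhgeim--Klibanov absorption you describe belongs to the singular-weight proof of Theorem~\ref{thm:reference}.
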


\begin{remark}\label{rem:holder-features}
The constants in the stability estimate depend on $\delta_0$ and deteriorate as $\delta_0\downarrow0$. In particular, the estimate becomes trivial in the limit $\delta_0 \rightarrow 0$.

Several features of Theorem~\ref{thm:holder} are worth noting.
(i)~The stability holds on any compactly contained subdomain
$\Om_0$ satisfying $\ooo{\OOO_0} \subset\Om$; for the H\"older stability,
one cannot take $\Om_0=\Om$ in general, but $\Om_0$
can be chosen arbitrarily close to $\Om$.
(ii)~The data norm $\mathcal{B}$ involves time derivatives up to order two
on $\omega\times I_0$, one order higher than in
Theorem~\ref{thm:reference}. 
However, under a priori boundedness assumptions
of solutions with sufficient orders of Sobolev spaces, we can apply 
the Sobolev interpolation inequality (e.g., Adams \cite{Adams1975Sobolev})
to weaken the data norm for the 
same H\"older stability.
By varying $t_0\in(\varepsilon,T-\varepsilon)$ and repeating the
argument (cf.\ \cite[Theorem~4.5.1]{Y25}), one can also
prove H\"older stability estimate for $u-\widetilde u$ and $v-\widetilde v$ 
in $\Om_0\times (\varepsilon,T-\varepsilon)$.
\end{remark}

Theorem \ref{thm:holder} readily yields the following uniqueness, since $f = \www f$ in any domain $\OOO_0$ such that $\ooo{\OOO_0} \subset \OOO$.
 
\begin{corollary}[Uniqueness from a local interior observation]\label{cor:unique}
Let $(u,v,f)$ and $(\tilde u,\tilde v,\tilde f)$ be two admissible triples.
Assume that
\[
u=\tilde u,\quad v=\tilde v\quad\text{in }\omega\times I_0,
\qquad
u(\cdot,t_0)=\tilde u(\cdot,t_0)\quad\text{in }\Omega.
\]
Then $f=\tilde f$ in $\Omega$. If, in addition, the initial and boundary data coincide, then the forward uniqueness implies $(u,v)=(\tilde u,\tilde v)$ in $Q$.
\end{corollary}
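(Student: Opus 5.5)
The statement to prove is Corollary~\ref{cor:unique}. The plan is to apply Theorem~\ref{thm:holder} with zero data and then exhaust $\Omega$ by interior subdomains.

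\emph{Step 1: the data vanish.} Under the hypotheses, $u-\tilde u=0$ and $v-\tilde v=0$ in $\omega\times I_0$, hence also all their time derivatives. Since $\partial_t u,\partial_t v\in H^{2,1}(Q)$, the derivatives $\partial_t^k(u-\tilde u)$ and $\partial_t^k(v-\tilde v)$ for $k\le 2$ are $L^2$ functions on $Q$. Their restrictions to the open set $\omega\times I_0$ are the distributional derivatives of the zero function, so they vanish there. Together with $u(\cdot,t_0)=\tilde u(\cdot,t_0)$ in $\Omega$, this gives $\mathcal{B}=0$ in \eqref{eq:B-def}.

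\emph{Step 2: the a priori bound.} The bound $\mathcal{N}$ in \eqref{eq:N-def} has to be finite for both triples; this is the one point needing care. For $y=u-\tilde u$ and $z=v-\tilde v$, I would argue that membership in $\mathcal{U}(M)$ suffices:
\begin{itemize}
\item $\partial_t y\in H^{2,1}(Q)$ gives $\nabla_{x,t}\partial_t^k y$ on $\partial\Omega\times I_0$ for $k=0,1$, by the trace theorem applied to $H^{2,1}$ functions. The regularity is needed because $\partial_t^2 y$ on the boundary requires one more time derivative.
\item The values at $t_0\pm\delta_0$ lie in $H^1(\Omega)$, because $H^{2,1}(Q)\subset C([0,T];H^1(\Omega))$.
\end{itemize}
Hence some finite $\mathcal{N}$ exists. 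Since $\mathcal{N}$ is arbitrary in Theorem~\ref{thm:holder}, we simply choose it large enough.

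\emph{Step 3: exhaust $\Omega$.} For each $\Omega_0$ with $\overline\omega\subset\Omega_0\subset\overline{\Omega}_0\subset\Omega$, Theorem~\ref{thm:holder} gives
\[
\|f-\tilde f\|_{L^2(\Omega_0)}\le C\,0^{\theta}=0,
\]
since $\theta>0$. Take an increasing sequence of such $\Omega_0^{(m)}$ whose union is $\Omega$, for example $\{x\in\Omega:\operatorname{dist}(x,\partial\Omega)>1/m\}$ joined to a neighbourhood of $\overline\omega$ and smoothed if needed. Then $f=\tilde f$ a.e.\ in each $\Omega_0^{(m)}$, hence in $\Omega$ by countable union. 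Note that only interior subdomains are used, so Theorem~\ref{thm:holder} need not hold up to the boundary.

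\emph{Step 4: forward uniqueness.} Assume in addition that the initial and boundary data coincide. Then $(u,v)$ and $(\tilde u,\tilde v)$ solve the same system with the same source $f$. The difference $(y,z)$ satisfies a linear parabolic system with zero initial and boundary data and bounded coefficients:
\[
\partial_t y=d_1\Delta y-a_1 y-\bigl(y v^2+\tilde u(v+\tilde v)z\bigr),
\]
and the analogous equation for $z$. The coefficients are bounded because $u,v,\tilde u,\tilde v\in W^{1,\infty}(Q)$. Multiply the equations by $y$ and $z$, integrate over $\Omega$, and apply Gronwall's inequality; this gives $y=z=0$ in $Q$. I expect no real obstacle here. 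The only delicate point is Step~2, checking that the a priori quantity $\mathcal{N}$ is finite for admissible triples.
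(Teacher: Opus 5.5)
Your proposal is correct and follows the paper's argument: with $\mathcal{B}=0$, Theorem~\ref{thm:holder} gives $f=\tilde f$ on every $\Omega_0$ with $\overline{\omega}\subset\Omega_0\subset\overline{\Omega}_0\subset\Omega$, hence on all of $\Omega$, and the forward-uniqueness clause is the standard energy--Gronwall argument that the paper only asserts. One small correction in Step~2, a point the paper sidesteps by simply postulating $\mathcal{N}$ as an a priori bound: the $H^{2,1}$ trace theorem applied to $\partial_t y$ gives boundary traces of $\partial_t y$ and $\nabla_x\partial_t y$, but not of $\partial_t^2 y$, which is only in $L^2(Q)$; that component of $\nabla_{x,t}\partial_t y$ on $\partial\Omega\times I_0$ is finite only because the common Dirichlet data give $y=z=0$ on $\partial\Omega\times(0,T)$, so all time derivatives vanish there, and without common boundary data it would have to be assumed, as the paper does.
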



\section{Proof of Theorem \ref{thm:reference}}

\label{sec:proof-lipschitz}

We prove Theorem~\ref{thm:reference} by combining a singular-weight Carleman estimate
for a linearized system with an identification argument at time $t=t_0$. In the proofs, we can choose $I:= (t_0-\delta, \, t_0+\delta)$ such that 
$\delta > 0$ is sufficiently small and $0<\delta<\min\{t_0-t_1,t_2-t_0\}$.
Indeed, it is sufficient that we prove \eqref{eq:stab-reference} with the data norms on $(t_0-\delta, \, t_0+\delta)$.


\medskip\noindent\textbf{Difference system.}

Let $(u,v,f)$ and $(\widetilde u,\widetilde v,\widetilde f)\in\mathcal{U}(M)$ 
solve
\eqref{eq:forward}--\eqref{eq:bc}, where $g, h, a_1, a_2$ are given.
Set
\[
y:=u-\widetilde u,\qquad z:=v-\widetilde v,\qquad F:=f-\widetilde f.
\]
Then $y=z=0$ on $\p\Om\times(0,T)$.
Using the factorisation
\begin{equation}\label{eq:factorisation}
u v^2-\widetilde u\widetilde v^2
=(u-\widetilde u)v^2+\widetilde u(v+\widetilde v)(v-\widetilde v)
=v^2 y + \widetilde u(v+\widetilde v)z,
\end{equation}
we obtain the coupled linear system
\begin{equation}\label{eq:diffsystem}
\begin{cases}
\Dt y = d_1 \Lap y + p_1(x,t)\,y + p_2(x,t)\,z + F(x),\\[3pt]
\Dt z = d_2 \Lap z + p_3(x,t)\,z + p_4(x,t)\,y,
\end{cases}
\qquad (x,t)\in Q,
\end{equation}
with homogeneous Dirichlet boundary conditions
$y=z=0$ on $\p\Om\times(0,T)$,
where the coefficients are
\begin{equation}\label{eq:pcoeff}
\begin{aligned}
p_1&:=-v^2-a_1,\qquad & p_2&:=-\widetilde u(v+\widetilde v),\\
p_3&:=\widetilde u(v+\widetilde v)-a_2,\qquad & p_4&:=v^2.
\end{aligned}
\end{equation}
By the definition of $\mathcal{U}(M,M_0)$, there exists $M_1>0$, depending
only on $M$ and $M_0$, such that
\begin{equation}\label{eq:pk-bounds}
\norm{p_k}_{L^\infty(Q)}+\norm{\p_t p_k}_{L^\infty(Q)}\le M_1,
\qquad k=1,2,3,4.
\end{equation}

\medskip\noindent\textbf{Carleman estimate with singular weight.}

The proof of Theorem~\ref{thm:reference} relies on a Carleman estimate with a singular temporal weight that blows up at the endpoints of the observation interval $I$. We formulate it following
\cite{fursikov1996controllability, Ima1}.

\begin{lemma}[Construction of a weight function]\label{lem:d}
Let a subdomain $\omega_0$ satisfy $\omega_0 \subset \ooo{\omega_0} 
\subset \omega \subset \ooo{\omega} \subset\Omega$.
There exists $d\in C^2(\overline\Omega)$ such that
\[
d>0\ \text{in }\Omega,\qquad d=0\ \text{on }\partial\Omega,\qquad
|\nabla d|>0\ \text{in }\overline{\Omega\setminus\omega_0}.
\]
\end{lemma}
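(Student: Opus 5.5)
The plan is the standard Fursikov--Imanuvilov construction: perturb a Morse function with $C^2$ boundary control, then push its critical points into $\omega_0$ by a diffeomorphism. The ingredients are the tubular neighbourhood of $\p\Om$ and the density of Morse functions.

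\textbf{Step 1 (a function with the right boundary behaviour).} Since $\p\Om$ is $C^2$, the signed distance to $\p\Om$ is $C^2$ on a collar $\{x\in\ooo\Om:\operatorname{dist}(x,\p\Om)<2\varepsilon\}$. Its gradient has modulus one there. Let $\chi$ be a smooth cutoff equal to $1$ on $\{\operatorname{dist}<\varepsilon\}$ and $0$ off $\{\operatorname{dist}<2\varepsilon\}$. Set
\[
d_1:=\chi\cdot\operatorname{dist}(\cdot,\p\Om)+(1-\chi)\,\psi,
\]
where $\psi\ge\varepsilon$ is smooth. Then $d_1\in C^2(\ooo\Om)$, $d_1>0$ in $\Om$, $d_1=0$ on $\p\Om$, and $|\nabla d_1|=1$ near $\p\Om$.

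\textbf{Step 2 (finitely many nondegenerate critical points).} Perturb $d_1$ in the interior $\{\operatorname{dist}>\varepsilon/2\}$ by a small $C^2$ perturbation so that it becomes a Morse function there. This uses the density of Morse functions (Thom transversality, or adding a generic small linear function $\xi\cdot x$ localized by a cutoff). The perturbation is chosen small enough to preserve positivity and the nonvanishing gradient near the boundary. Since $\ooo\Om$ is compact, the resulting $d_2$ has only finitely many critical points $x_1,\dots,x_N$, all lying in the interior region $K:=\{\operatorname{dist}\ge\varepsilon/2\}$.

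\textbf{Step 3 (moving the critical points into $\omega_0$).} Choose distinct points $y_1,\dots,y_N\in\omega_0$. Since $\Om$ is connected and open, for $n\ge2$ there exist disjoint smooth paths inside $\{\operatorname{dist}>\varepsilon/4\}$ joining $x_j$ to $y_j$. Integrating compactly supported vector fields along these paths yields a $C^\infty$ diffeomorphism $\Phi$ of $\ooo\Om$ with $\Phi(x_j)=y_j$ and $\Phi=\mathrm{id}$ on the collar $\{\operatorname{dist}<\varepsilon/4\}$. The case $n=1$ needs separate treatment, since paths cannot be made disjoint and points cannot be reordered. There $\Om=(\alpha,\beta)$, and one can simply take $d$ to be a $C^2$ function that increases up to a point of $\omega_0$ and decreases afterwards, with positive derivative at $\alpha$ and negative derivative at $\beta$.

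\textbf{Step 4 (the weight).} Put $d:=d_2\circ\Phi^{-1}$. Then $d\in C^2(\ooo\Om)$, $d>0$ in $\Om$, and $d=0$ on $\p\Om$. Its critical points are exactly $\Phi(x_j)=y_j\in\omega_0$. Hence $\nabla d\neq0$ on the compact set $\ooo{\Om\setminus\omega_0}$. This holds at boundary points too: there $\Phi=\mathrm{id}$, so $\nabla d=\nabla d_1$, which has modulus one.

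\textbf{Main obstacle.} The main difficulty is the diffeomorphism in Step 3, namely constructing isotopies that move finitely many points while fixing the boundary collar. I would cite the standard lemma (Fursikov--Imanuvilov \cite{fursikov1996controllability}, Lemma 1.1) rather than reprove it, noting that $C^2$ regularity of $\p\Om$ suffices because $\Phi$ is the identity near the boundary.
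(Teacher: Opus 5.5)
Your proposal is correct. It is essentially the standard Fursikov--Imanuvilov construction: the distance function on a $C^2$ collar, a small Morse perturbation supported in the interior, and then a diffeomorphism equal to the identity near $\partial\Omega$ that carries the finitely many nondegenerate critical points into $\omega_0$, with the case $n=1$ handled directly. The paper states this lemma without proof and relies on exactly this standard result from \cite{fursikov1996controllability}, so your route coincides with the one it implicitly uses.
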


Define the singular temporal factor
\[
\theta(t):=\bigl(t-(t_0-\delta)\bigr)\bigl((t_0+\delta)-t\bigr),
\qquad t\in I.
\]
Note that $\theta>0$ on $I$, the function $\theta$ attains its maximum 
$\delta^2$ at
$t=t_0$, and $\theta(t)\to 0$ as $\theta(t)\to 0$ as $t\to (t_0-\delta)^+$ or $t\to (t_0+\delta)^-$. 

Fix $\lambda>0$ sufficiently large and, for
$(x,t)\in\Omega\times I$, set
\begin{equation}\label{eq:weights-sing}
\varphi(x,t):=\frac{e^{\lambda d(x)}}{\theta(t)},\qquad
\alpha(x,t):=\frac{e^{\lambda d(x)}-e^{2\lambda\|d\|_{C(\overline\Omega)}}}
{\theta(t)}.
\end{equation}
Since $e^{\lambda d(x)}<e^{2\lambda\|d\|_{C(\overline\Om)}}$ for all
$x\in\overline\Om$, the function $\alpha$ is strictly negative and satisfies
\begin{equation}\label{eq:alpha-blowup}
\alpha(x,t)\to -\infty\quad\text{as }\,\,
t\to (t_0-\delta)^+ \text{ or  } t\to (t_0+\delta)^-,
\end{equation}
so that $e^{2s\alpha(x,t)}\to0$ at the endpoints of $I$ for any $s>0$.
Moreover,
\begin{equation}\label{eq:alpha-max}
\alpha(x,t)\le \alpha(x,t_0)\qquad\text{for all }
(x,t)\in\Omega\times I.
\end{equation}

\begin{figure}[htbp]
\centering

\begin{subfigure}[t]{0.36\textwidth}
\centering
\begin{tikzpicture}[>=Stealth,thick,
  x={(0.82cm,-0.28cm)}, y={(0.82cm,0.28cm)}, z={(0cm,1cm)},
  scale=0.72,
  declare function={
  dfunc(\r) = 2.6*(1-\r*\r)*(1-\r*\r)
  + 1.8*exp(-\r*\r*18)*(1-\r*\r);
  }]

 \def\rx{3.0} \def\ry{2.2}

 \draw[gray!45,thin,dashed]
  plot[domain=0:180,samples=60]
  ({\rx*cos(\x)},{\ry*sin(\x)},0);

 \foreach \ang in {15,35,...,165}{
  \draw[gray!25,very thin]
  plot[domain=0:0.97,samples=30,smooth,variable=\q]
 ({\rx*\q*cos(\ang)},{\ry*\q*sin(\ang)},{dfunc(\q)});
 }
 \foreach \ang in {195,215,...,345}{
  \draw[gray!45,thin]
  plot[domain=0:0.97,samples=30,smooth,variable=\q]
 ({\rx*\q*cos(\ang)},{\ry*\q*sin(\ang)},{dfunc(\q)});
 }

 \draw[very thick, red!70!black]
  plot[domain=0:0.98,samples=80,smooth,variable=\q]
  ({-\rx*\q},0,{dfunc(\q)});
 \draw[very thick, red!70!black]
  plot[domain=0:0.98,samples=80,smooth,variable=\q]
  ({\rx*\q},0,{dfunc(\q)});

 \draw[gray!35,thin,densely dashed]
  plot[domain=180:360,samples=50]
  ({0.5*\rx*cos(\x)},{0.5*\ry*sin(\x)},{dfunc(0.5)});

 \draw[very thick]
  plot[domain=180:360,samples=60]
  ({\rx*cos(\x)},{\ry*sin(\x)},0);

 \draw[blue!70!black, semithick, fill=blue!10, fill opacity=0.4]
  plot[domain=0:360,samples=60]
  ({0.55*cos(\x)},{0.42*sin(\x)},0) -- cycle;
 \node[blue!70!black,font=\scriptsize] at (0.65,0.5,-0.25) {$\omega_0$};

 \draw[->,thin,red!70!black] (0.4,1.2,5.2)
  node[above,font=\small]{$d(x)$}
  -- (0.05,0.05,{dfunc(0)});
 \node[font=\scriptsize] at (0,-\ry-0.6,-0.2) {$d=0$ on $\partial\Omega$};
 \node[font=\scriptsize] at (-\rx-0.5,0,1.8) {$d>0$};
 \node[font=\scriptsize] at (\rx+0.4,-0.2,0) {$\Omega$};
 \draw[<-,thin,gray]
  ({0.7*\rx*cos(250)},{0.7*\ry*sin(250)},{dfunc(0.7)+0.15})
  -- (-1.8,-1,3.5)
  node[above,black,font=\scriptsize,align=center]
  {$|\nabla d|>0$ in $\overline\Omega\!\setminus\!\omega_0$};

\end{tikzpicture}
\caption{Three-dimensional view of the weight function $d(x)$.}

\label{fig:dx-3d}
\end{subfigure}
\hfill
\begin{subfigure}[t]{0.62\textwidth}
\centering
\begin{tikzpicture}[>=Stealth,thick,
 declare function={
  theta(\t) = \t*(4-\t) ;
  phifun(\t) = min(3.5, 5.0/theta(\t) ) ;
  efun(\t) = 2.2*exp( -2.5/max(0.06,theta(\t)) + 2.5/4 ) ;
 },
 scale=0.65]

 \begin{scope}[xshift=0cm]
  \draw[->] (-0.5,0) -- (5.3,0) node[right,font=\scriptsize]{$t$};
  \draw[->] (0,-0.3) -- (0,3.6);

  \draw[very thick, blue!70!black]
  plot[domain=0.02:3.98,samples=100,smooth]
 (\x,{ 0.65*theta(\x) });

  \foreach \pos/\lab in {0/{t_0\!-\!\delta}, 2/{t_0}, 4/{t_0\!+\!\delta}} {
  \draw (\pos,0.08)--(\pos,-0.08) node[below,font=\scriptsize]{$\lab$};
  }

  \draw[densely dashed,gray,thin] (0,{0.65*4}) -- (2,{0.65*4});
  \draw[densely dashed,gray,thin] (2,0) -- (2,{0.65*4});
  \fill (2,{0.65*4}) circle(1.5pt);
  \node[left,font=\scriptsize] at (-0.05,{0.65*4}) {$\delta^2$};

  \fill (0,0) circle(2pt);
  \fill (4,0) circle(2pt);

  \node[blue!70!black,font=\scriptsize,align=center]
  at (2,3.75)
  {$\theta(t)=(t\!-\!t_0\!+\!\delta)(t_0\!+\!\delta\!-\!t)$};

  \node[font=\footnotesize,gray] at (2,-1.0) {(i)};
 \end{scope}

 \begin{scope}[xshift=7.5cm]
  \draw[->] (-0.5,0) -- (5.3,0) node[right,font=\scriptsize]{$t$};
  \draw[->] (0,-0.3) -- (0,4.2);

  \draw[very thick, red!70!black]
  plot[domain=0.38:3.62,samples=120,smooth]
 (\x,{ phifun(\x) });

  \draw[->,red!70!black,semithick] (0.40,3.4) -- (0.18,3.9);
  \draw[->,red!70!black,semithick] (3.60,3.4) -- (3.82,3.9);

  \draw[red!30,thin,densely dotted] (0,0) -- (0,4.0);
  \draw[red!30,thin,densely dotted] (4,0) -- (4,4.0);

  \draw[very thick, green!50!black, densely dashed]
  plot[domain=0.05:3.95,samples=120,smooth]
 (\x,{ efun(\x) });

  \foreach \pos/\lab in {0/{t_0\!-\!\delta}, 2/{t_0}, 4/{t_0\!+\!\delta}} {
  \draw (\pos,0.08)--(\pos,-0.08) node[below,font=\scriptsize]{$\lab$};
  }

  \draw[rounded corners=2pt, fill=white, draw=gray!50, thin]
  (2.55,3.15) rectangle (5.75,4.5);
  \draw[very thick, red!70!black] (2.75,4.1) -- (3.35,4.1);
  \node[right,font=\scriptsize] at (3.4,4.1)
  {$\varphi=\frac{e^{\lambda d}}{\theta}$};
  \draw[very thick, green!50!black, densely dashed]
  (2.75,3.5) -- (3.35,3.5);
  \node[right,font=\scriptsize] at (3.4,3.5) {$e^{2s\alpha}\!\to\!0$};

  \node[font=\footnotesize,gray] at (2,-1.0) {(ii)};
 \end{scope}

\end{tikzpicture}
\caption{Temporal weight components on $I=(t_0-\delta,t_0+\delta)$.
(i)~The cut-off function $\theta(t)=(t-t_0+\delta)(t_0+\delta-t)$
vanishes at the endpoints $t=t_0-\delta$ and $t=t_0+\delta$.
(ii)~The weight satisfies $\varphi=e^{\lambda d}/\theta\to\infty$ and $e^{2s\alpha}\to 0$ as
$t\to (t_0-\delta)^+$ or $t\to (t_0+\delta)^-$.}
\label{fig:singular-weight}
\end{subfigure}

\caption{Carleman weight functions for the singular-weight estimate.}
\label{fig:weights}
\end{figure}
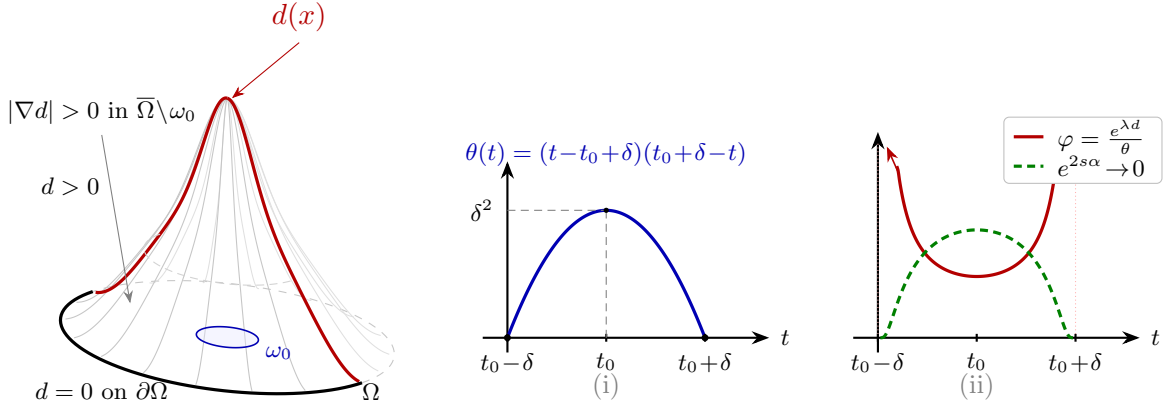

\begin{lemma}[Estimate on $\p_t\alpha$]\label{lem:dtalpha}
There exists $C>0$, depending on $\delta$, $\lambda$, and
$\norm{d}_{C(\overline\Om)}$, such that
\begin{equation}\label{eq:dtalpha}
|\partial_t\alpha(x,t)|\le C\,\varphi(x,t)^2
\qquad\text{for all }(x,t)\in\Omega\times I.
\end{equation}
\end{lemma}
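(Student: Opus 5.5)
Since the spatial and temporal variables separate in $\alpha$, the plan is to differentiate directly and then compare with $\varphi^2$ using only elementary bounds on $\theta$ and $d$. Write $\alpha(x,t)=\bigl(e^{\lambda d(x)}-e^{2\lambda\|d\|_{C(\overline\Omega)}}\bigr)\theta(t)^{-1}$. Then
\[
\partial_t\alpha(x,t)=-\bigl(e^{\lambda d(x)}-e^{2\lambda\|d\|_{C(\overline\Omega)}}\bigr)\frac{\theta'(t)}{\theta(t)^2},
\qquad \theta'(t)=2t_0-2t .
\]
On $I=(t_0-\delta,t_0+\delta)$ we have $|\theta'(t)|\le 2\delta$.

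Next we bound the spatial factor. Since $d>0$ in $\Omega$ and $d\in C(\overline\Omega)$, we have $0\le e^{\lambda d(x)}\le e^{2\lambda\|d\|_{C(\overline\Omega)}}$. Therefore
\[
|\partial_t\alpha(x,t)|\le \frac{2\delta\, e^{2\lambda\|d\|_{C(\overline\Omega)}}}{\theta(t)^2}.
\]

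To compare with $\varphi^2=e^{2\lambda d(x)}\theta(t)^{-2}$, we use $d\ge 0$ on $\overline\Omega$, which gives $e^{2\lambda d(x)}\ge 1$. Hence $\theta(t)^{-2}\le\varphi(x,t)^2$, and the claimed estimate \eqref{eq:dtalpha} follows with
\[
C:=2\delta\, e^{2\lambda\|d\|_{C(\overline\Omega)}} .
\]
This constant depends only on $\delta$, $\lambda$ and $\|d\|_{C(\overline\Omega)}$, as asserted.

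The only point needing care is the lower bound $e^{2\lambda d}\ge1$, which requires $d\ge0$ on $\overline\Omega$. This is provided by Lemma~\ref{lem:d}: $d>0$ in $\Omega$ and $d=0$ on $\partial\Omega$. No other obstacle arises, since the singular behaviour of $\partial_t\alpha$ near the endpoints of $I$ is exactly of the order $\theta^{-2}$, the same as that of $\varphi^2$.
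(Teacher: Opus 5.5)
Your proof is correct and is essentially the paper's own argument. You write $\alpha=A/\theta$, bound $|\theta'|\le 2\delta$ on $I$ and $|A|\le e^{2\lambda\|d\|_{C(\overline\Omega)}}$, and use $d\ge 0$ to get $\varphi^2\ge\theta^{-2}$, which gives the same constant $C=2\delta\,e^{2\lambda\|d\|_{C(\overline\Omega)}}$.
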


\begin{proof}
Write 
$$
A(x):=e^{\lambda d(x)}-e^{2\lambda\|d\|_{C(\overline\Om)}}.
$$
Then 
$\alpha=A/\theta$ and $\p_t\alpha = -A\,\theta'/\theta^2.$
Since $$|\theta'(t)|=|2t_0-2t|\le 2\delta \text{ for } t\in I \text{, and }
|A(x)|\le e^{2\lambda\|d\|_{C(\overline\Om)}},$$ 
while
$\varphi^2=e^{2\lambda d}/\theta^2\ge 1/\theta^2$, we obtain
$$|\p_t\alpha|\le 2\delta\,
e^{2\lambda\|d\|_{C(\overline\Om)}}\,\varphi^2.$$
\end{proof}

Define the Carleman functional
\[
\mathcal I_s(w):=\int_{\Omega\times I}\left(\frac{1}{s\varphi}|\partial_t w|^2
+\frac{1}{s\varphi}\sum_{i,j=1}^n|\p_i\p_j w|^2
+s\varphi|\nabla w|^2+s^3\varphi^3|w|^2\right)e^{2s\alpha}\,dx\,dt.
\]

\begin{lemma}[Carleman estimate with singular weight]\label{lem:carleman}
Let $d$ and $\alpha,\varphi$ be as above.
There exist constants $\lambda_0>0$, $s_0>0$, and $C>0$ such that for all
$\lambda\ge\lambda_0$ and $s\ge s_0$, the following holds for any
$w\in H^{2,1}(\Omega\times I)$ with $w=0$ on $\partial\Omega\times I$:
\begin{equation}\label{eq:carleman-scalar}
\mathcal I_s(w)\le
C\int_{\Omega\times I}
\bigl|\partial_t w-d_k\Delta w + B w\bigr|^2 e^{2s\alpha}\,dx\,dt
+C\int_{\omega_0\times I} s^3\varphi^3 |w|^2 e^{2s\alpha}\,dx\,dt,
\end{equation}
where $d_k\in\{d_1,d_2\}$ and $B\in L^\infty(\Om\times I)$.
\end{lemma}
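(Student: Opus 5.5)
This is the classical Fursikov--Imanuvilov parabolic Carleman estimate. The plan follows the standard route of \cite{fursikov1996controllability, Ima1}: conjugate the operator by the weight, split the conjugated operator into symmetric and antisymmetric parts, and estimate their cross term.

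First, we reduce to $B=0$. Since $|Bw|^2e^{2s\alpha}\le \|B\|_{L^\infty}^2|w|^2e^{2s\alpha}$, and since $\varphi\ge e^{\lambda\cdot 0}/\delta^2>0$, this term is absorbed by the $s^3\varphi^3|w|^2$ part of $\mathcal I_s(w)$ once $s$ is large. Dividing by $d_k$ (or rescaling $t$), we may take $d_k=1$ up to constants.

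Next, set $\psi:=e^{s\alpha}w$. Because of \eqref{eq:alpha-blowup}, $\psi$ and all its weighted quantities vanish at $t=t_0\pm\delta$, and $\psi=0$ on $\partial\Omega\times I$. This allows integration by parts in both $x$ and $t$ without boundary terms in time. Write $e^{s\alpha}(\partial_t-\Delta)(e^{-s\alpha}\psi)=L_1\psi+L_2\psi+R\psi$, where
\[
L_1\psi=-\Delta\psi-s^2\lambda^2\varphi^2|\nabla d|^2\psi-s(\partial_t\alpha)\psi,
\qquad
L_2\psi=\partial_t\psi+2s\lambda\varphi\nabla d\cdot\nabla\psi+2s\lambda^2\varphi|\nabla d|^2\psi,
\]
and $R$ collects the lower-order terms. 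Here we use $\nabla\alpha=\nabla\varphi=\lambda\varphi\nabla d$.

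The core step is the expansion of $2\int L_1\psi\,L_2\psi$. Term by term, we integrate by parts. On $\partial\Omega$, $\nabla\psi=\partial_\nu\psi\,\nu$ and $\partial_\nu d<0$ (since $d>0$ inside and $|\nabla d|>0$ on $\partial\Omega$), so the boundary term $-s\lambda\int_{\partial\Omega\times I}\varphi\,\partial_\nu d\,|\partial_\nu\psi|^2$ has the good sign and can be dropped. The leading interior contributions are
\[
2s^3\lambda^4\int\varphi^3|\nabla d|^4|\psi|^2
\qquad\text{and}\qquad
2s\lambda^2\int\varphi|\nabla d\cdot\nabla\psi|^2+\ldots .
\]
The gradient term is made positive in the usual way by adding $s\lambda^2\varphi|\nabla d|^2\times$ the $L_1$ identity. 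Terms involving $\partial_t\alpha$ and $\partial_t\varphi$ are controlled by Lemma~\ref{lem:dtalpha} and by $|\partial_t\varphi|\le C\varphi^2$. These give contributions of size $s^2\varphi^3|\psi|^2$ and $s\varphi^2|\psi|^2$, which are of lower order than $s^3\lambda^4\varphi^3$ for $s$ large. The factor $|\partial_t\alpha|\,\varphi^{-2}$ is bounded since $\theta$ is bounded.

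This step is the main obstacle. The careful bookkeeping of powers of $s$, $\lambda$, $\varphi$ must be done first in $\lambda\ge\lambda_0$, then in $s\ge s_0(\lambda)$. Moreover, positivity only holds where $|\nabla d|\ge c>0$, that is, on $\overline{\Omega\setminus\omega_0}$ by Lemma~\ref{lem:d}. On $\omega_0$ we add $Cs^3\lambda^4\int_{\omega_0\times I}\varphi^3|\psi|^2$ to both sides, which yields exactly the observation term in \eqref{eq:carleman-scalar}. The gradient term on $\omega_0$ is recovered from the $L^2$ term via a cutoff and an energy identity on a slightly larger set inside $\omega$. Alternatively, as is standard, we keep $\omega_0$ there and shrink it beforehand.

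Finally, the second-order and time-derivative terms are recovered. From $\|L_1\psi\|^2+\|L_2\psi\|^2\le C\|e^{s\alpha}(\partial_t-\Delta)w\|^2+\ldots$, we get $\frac1{s\varphi}|\partial_t\psi|^2$ and $\frac1{s\varphi}|\Delta\psi|^2$ by multiplying $L_2\psi$, respectively $L_1\psi$, by $(s\varphi)^{-1/2}$. The full Hessian follows from $\|\partial_i\partial_j\psi\|\le C\|\Delta\psi\|$ for Dirichlet data in a $C^2$ domain, applied with weights. Returning from $\psi$ to $w$ via $w=e^{-s\alpha}\psi$ only creates terms of the form $s\varphi|\nabla\psi|^2$ and $s^3\varphi^3|\psi|^2$, which are already controlled. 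This gives $\mathcal I_s(w)$ on the left-hand side.
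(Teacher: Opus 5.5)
Your proposal is correct and follows the standard Fursikov--Imanuvilov argument: conjugate $\partial_t-d_k\Delta$ by $e^{s\alpha}$, split the result into $L_1+L_2$, let the singular temporal weight remove the time-boundary terms, use $\partial_\nu d\le 0$ for the lateral boundary term, and add a local $L^2$ observation where $\nabla d$ may vanish. This is the argument the paper relies on, since it cites the literature rather than proving the lemma. Two harmless points: your bookkeeping gives $s_0$ and $C$ depending on $\lambda$, which suffices because $\lambda$ is fixed once and for all in the weights; and the cross term $s\,\partial_t\alpha\,\partial_t|\psi|^2$ also needs $|\partial_t^2\alpha|\le C\varphi^3$, which follows exactly as the bound on $\partial_t\alpha$.
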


This is a standard Carleman inequality for parabolic operators with interior observation. The singular behavior of $\theta$ at the endpoints of $I$ ensures that the time-boundary contributions vanish after the integration-by-parts argument.
The proof can be found in \cite{fursikov1996controllability, Ima1}.

\begin{proof}[Proof of Theorem~\ref{thm:reference}]
We divide the argument into four steps. All constants $C>0$ depend on $\Omega,\omega,T,t_0,\delta,M,M_0$ and may change from line to line.

\medskip
\noindent\textbf{Step 1: Carleman estimate for the coupled system.}

Set $y_1:=\p_t y$ and $z_1:=\p_t z$.
Differentiating \eqref{eq:diffsystem} with respect to $t$, yields
\begin{equation}\label{eq:diffsystem-dt}
\begin{cases}
\p_t y_1 = d_1\Lap y_1 + p_1 y_1 + p_2 z_1
 + (\p_t p_1) y + (\p_t p_2) z,\\[3pt]
\p_t z_1 = d_2\Lap z_1 + p_3 z_1 + p_4 y_1
 + (\p_t p_3) z + (\p_t p_4) y,
\end{cases}
\qquad (x,t)\in \Omega\times I,
\end{equation}
with $y_1=z_1=0$ on $\p\Om\times I$.

We apply Lemma~\ref{lem:carleman} to each of the four functions $y,z,y_1,z_1$:
for $y$ and $y_1$ we use diffusion constant $d_1$, and for $z$ and $z_1$ we
use $d_2$.
The right-hand sides of the Carleman estimates involve:
for $y$, $$|\p_t y-d_1\Lap y+(-p_1)y|^2=|p_2 z+F|^2;$$
for $z$, $$|\p_t z-d_2\Lap z+(-p_3)z|^2=|p_4 y|^2;$$
and similarly for $y_1$ and $z_1$ using \eqref{eq:diffsystem-dt}.

Summing the four inequalities and using \eqref{eq:pk-bounds} together with
the elementary estimate $$|A+B|^2\le2|A|^2+2|B|^2,$$ we obtain, for all
sufficiently large $s$,
\begin{align}
&\int_{\Omega\times I}\Biggl[
\frac{1}{s\varphi}\Bigl(|\p_t^2 y|^2+|\p_t^2 z|^2\Bigr)
+s^3\varphi^3\sum_{\ell=0}^1
\Bigl(|\p_t^\ell y|^2+|\p_t^\ell z|^2\Bigr)
\Biggr]e^{2s\alpha}\,dx\,dt \notag\\
&\qquad\le
C\int_{\Omega\times I}|F|^2 e^{2s\alpha}\,dx\,dt
+C\int_{\omega\times I}s^3\varphi^3\sum_{\ell=0}^1
\Bigl(|\p_t^\ell y|^2+|\p_t^\ell z|^2\Bigr)e^{2s\alpha}\,dx\,dt
\notag\\
&\qquad\quad
+C\int_{\Omega\times I}
\Bigl(|y|^2+|z|^2\Bigr)e^{2s\alpha}\,dx\,dt.
\label{eq:carleman-preabsorb}
\end{align}
The last integral arises from the lower-order terms
$(\p_t p_k)y$ and $(\p_t p_k)z$ in \eqref{eq:diffsystem-dt}.
Since the left-hand side contains
$s^3\varphi^3(|y|^2+|z|^2)e^{2s\alpha}$, choosing $s$ sufficiently large
absorbs the last integral on the right-hand side, yielding
\begin{align}
&\int_{\Omega\times I}\Biggl[
\frac{1}{s\varphi}\Bigl(|\p_t^2 y|^2+|\p_t^2 z|^2\Bigr)
+s^3\varphi^3\sum_{\ell=0}^1
\Bigl(|\p_t^\ell y|^2+|\p_t^\ell z|^2\Bigr)
\Biggr]e^{2s\alpha}\,dx\,dt \notag\\
&\qquad\le
C\int_{\Omega\times I}|F|^2 e^{2s\alpha}\,dx\,dt
+C\int_{\omega\times I}s^3\varphi^3\sum_{\ell=0}^1
\Bigl(|\p_t^\ell y|^2+|\p_t^\ell z|^2\Bigr)e^{2s\alpha}\,dx\,dt.
\label{eq:carleman-system}
\end{align}
Define the observation quantity
\begin{equation}\label{eq:D1-def}
D_1^2:=\int_{\omega\times I}\sum_{\ell=0}^1
\Bigl(|\p_t^\ell y|^2+|\p_t^\ell z|^2\Bigr)\,dx\,dt.
\end{equation}
Since $$\sup_{(x,t)\in\overline\Om\times I}
(s^3\varphi^3 e^{2s\alpha})\le e^{Cs}$$ for all $s\ge1$, we have
\begin{equation}\label{eq:obs-bound}
\int_{\omega\times I}s^3\varphi^3\sum_{\ell=0}^1
\Bigl(|\p_t^\ell y|^2+|\p_t^\ell z|^2\Bigr)e^{2s\alpha}\,dx\,dt
\le C e^{Cs} D_1^2.
\end{equation}

\medskip
\noindent\textbf{Step 2: Weighted estimate of $\p_t y(\cdot,t_0)$.}

Since $e^{2s\alpha(x,t)}\to0 $ as $ t\to t_0$, we write
\begin{align}
\int_\Om |\p_t y(x,t_0)|^2 e^{2s\alpha(x,t_0)}\,dx
&=\int_{t_0-\delta}^{t_0}\p_t\left(
\int_\Om |\p_t y|^2 e^{2s\alpha}\,dx\right)dt \notag\\
&=\int_{t_0-\delta}^{t_0}\int_\Om
\Bigl(2(\p_t y)(\p_t^2 y)+2s(\p_t\alpha)|\p_t y|^2\Bigr)
e^{2s\alpha}\,dx\,dt.
\label{eq:dt-y-identity}
\end{align}
By the Cauchy--Schwarz inequality with the factorisation
$$(\p_t y)(\p_t^2 y)=
(s\sqrt\varphi\,\p_t y)\cdot(s^{-1}\varphi^{-1/2}\,\p_t^2 y)$$
and the inequality $2ab\le a^2+b^2$, we obtain
\begin{equation}\label{eq:young-dty}
2|(\p_t y)(\p_t^2 y)|
\le s^2\varphi|\p_t y|^2+\frac{1}{s^2\varphi}|\p_t^2 y|^2.
\end{equation}
For the first term, using $\varphi\ge\delta^{-2}$, we have
$$s^2\varphi = (s^2\varphi)/(s^3\varphi^3)\cdot s^3\varphi^3
= (s\varphi^2)^{-1}\cdot s^3\varphi^3
\le C s^{-1}\cdot s^3\varphi^3.$$
For the second,
$s^{-2}\varphi^{-1}=s^{-1}\cdot(s\varphi)^{-1}$.
Thus,
\begin{equation}\label{eq:young-dty-final}
2|(\p_t y)(\p_t^2 y)|
\le \frac{C}{s}\left(s^3\varphi^3|\p_t y|^2
+\frac{1}{s\varphi}|\p_t^2 y|^2\right).
\end{equation}
For the second term in \eqref{eq:dt-y-identity}, using
Lemma~\ref{lem:dtalpha} and
$s\varphi^2\le Cs^{-1}\cdot s^3\varphi^3$ for large $s$, we obtain
$$2s|\p_t\alpha|\,|\p_t y|^2\le Cs^{-1}\cdot s^3\varphi^3|\p_t y|^2.$$

Combining with \eqref{eq:carleman-system} and \eqref{eq:obs-bound}, we
conclude
\begin{equation}\label{eq:dt-y-final}
\int_\Om |\p_t y(x,t_0)|^2 e^{2s\alpha(x,t_0)}\,dx
\le \frac{C}{s}\int_{\Om\times I}|F|^2 e^{2s\alpha}\,dx\,dt
+ C e^{Cs} D_1^2
\end{equation}
for all $s\ge s_0$.

\medskip
\noindent\textbf{Step 3: Weighted estimate of $z(\cdot,t_0)$.}

Similarly, since $e^{2s\alpha(x,t)}\to0$ as $t\to (t_0-\delta)^+$, we have
\begin{align}
\int_\Om |z(x,t_0)|^2 e^{2s\alpha(x,t_0)}\,dx
&=\int_{t_0-\delta}^{t_0}\int_\Om
\Bigl(2z\,\p_t z+2s(\p_t\alpha)|z|^2\Bigr)
e^{2s\alpha}\,dx\,dt.
\label{eq:z-identity}
\end{align}
For the first term, using the elementary inequality $2ab\le a^2+b^2$:
\[
2|z\,\p_t z|\le |z|^2+|\p_t z|^2.
\]
For the second term, Lemma~\ref{lem:dtalpha} gives
$2s|\p_t\alpha|\,|z|^2\le Cs\varphi^2|z|^2$.
Hence,
\[
2|z\,\p_t z|+2s|\p_t\alpha|\,|z|^2
\le (1+Cs\varphi^2)|z|^2+|\p_t z|^2.
\]
For $s\varphi\ge 1$ (which holds for $s$ sufficiently large, since
$\varphi\ge\delta^{-2}$), we have
\[
1+Cs\varphi^2\le \frac{C}{s}\,s^3\varphi^3,\qquad
1\le \frac{C}{s}\,s^3\varphi^3.
\]
Therefore,
\begin{equation}\label{eq:z-bound}
\int_\Om |z(x,t_0)|^2 e^{2s\alpha(x,t_0)}\,dx
\le \frac{C}{s}\int_{\Om\times I} s^3\varphi^3
\left(|z|^2+|\p_t z|^2\right)e^{2s\alpha}\,dx\,dt.
\end{equation}
The right-hand side is controlled by \eqref{eq:carleman-system}, so that
\begin{equation}\label{eq:z-final}
\int_\Om |z(x,t_0)|^2 e^{2s\alpha(x,t_0)}\,dx
\le \frac{C}{s}\int_{\Om\times I}|F|^2 e^{2s\alpha}\,dx\,dt
+ C e^{Cs} D_1^2.
\end{equation}
\medskip
\noindent\textbf{Step 4: Source identity at $t_0$ and conclusion.}

Evaluating the first equation of \eqref{eq:diffsystem} at $t=t_0$ gives
\begin{equation}\label{eq:source-identity}
F(x)=\p_t y(x,t_0)-d_1\Lap y(x,t_0)
-p_1(x,t_0)y(x,t_0)-p_2(x,t_0)z(x,t_0),
\end{equation}
whence
\begin{equation}\label{eq:F-pointwise}
|F(x)|^2\le C\Bigl(|\p_t y(x,t_0)|^2+|\Lap y(x,t_0)|^2
+|y(x,t_0)|^2+|z(x,t_0)|^2\Bigr).
\end{equation}
Define
\begin{equation}\label{eq:D2-def}
D_2:=\norm{y(\cdot,t_0)}_{H^2(\Om)}
=\norm{u(\cdot,t_0)-\tilde u(\cdot,t_0)}_{H^2(\Om)}.
\end{equation}
Multiplying \eqref{eq:F-pointwise} by $e^{2s\alpha(x,t_0)}$, integrating
over $\Om$, and using $\alpha(\cdot,t_0)$ bounded above on $\overline\Om$
for the terms involving $y(\cdot,t_0)$, we obtain
\begin{equation}\label{eq:F-weighted1}
\int_\Om |F(x)|^2 e^{2s\alpha(x,t_0)}\,dx
\le C e^{Cs} D_2^2
+C\int_\Om\Bigl(|\p_t y(x,t_0)|^2+|z(x,t_0)|^2\Bigr)
e^{2s\alpha(x,t_0)}\,dx.
\end{equation}
Substituting \eqref{eq:dt-y-final} and \eqref{eq:z-final} into
\eqref{eq:F-weighted1} yields
\begin{equation}\label{eq:F-weighted2}
\int_\Om |F(x)|^2 e^{2s\alpha(x,t_0)}\,dx
\le C e^{Cs}(D_1^2+D_2^2)
+\frac{C}{s}\int_{\Om\times I}|F|^2 e^{2s\alpha}\,dx\,dt.
\end{equation}
Since $e^{2s\alpha(x,t)}\le e^{2s\alpha(x,t_0)}$ by \eqref{eq:alpha-max}
and $F=F(x)$ is independent of $t$,
\[
\int_{\Om\times I}|F(x)|^2 e^{2s\alpha(x,t)}\,dx\,dt
\le |I|\int_\Om |F(x)|^2 e^{2s\alpha(x,t_0)}\,dx.
\]
Choosing $s\ge s_0$ so large that $C|I|/s\le 1/2$ absorbs the last term into
the left-hand side, yielding
\[
\int_\Om |F(x)|^2 e^{2s\alpha(x,t_0)}\,dx
\le C e^{Cs}(D_1^2+D_2^2).
\]
Since $\alpha(\cdot,t_0)$ is bounded below on $\overline\Om$, there exists
$c_*>0$ such that $e^{2s\alpha(x,t_0)}\ge e^{-c_* s}$ for all
$x\in\Om$.
Hence
$\norm{F}_{L^2(\Om)}^2\le C e^{Cs}(D_1^2+D_2^2)$,
and therefore
$$\norm{f-\tilde f}_{L^2(\Om)}\le C(D_1+D_2),$$
which is \eqref{eq:stab-reference}.
\end{proof}


\section{Proof of Theorem \ref{thm:holder}}
\label{sec:proof-holder}


In this section, we establish a H\"older-type stability estimate using a
Carleman estimate with a regular (non-singular) weight function.
Following \cite[Chapter~4]{Y25}, we will prove 
stability on any interior subdomain $\Om_0$ of $\Om$ at the cost of a
H\"older exponent strictly less than one.

Let $\Om_0$ be an arbitrary subdomain satisfying
$\overline\omega\subset\Om_0\subset\overline{\Om}_0\subset\Om$.
Since $\overline{\Om}_0\subset\Om$, the function $d$ from Lemma~\ref{lem:d}
satisfies
\[
\min_{x\in\overline{\Om}_0}d(x)>0.
\]

Fix $\delta_0\in(0,\min\{t_0,T-t_0\})$ and set
$I_0:=(t_0-\delta_0,\;t_0+\delta_0)$,
$Q_{I_0}:=\Om\times I_0$.
Choose $\beta>0$ sufficiently large and $r\in(0,\delta_0)$
sufficiently small such that
\begin{equation}\label{eq:beta-r}
r^2<\frac{1}{\beta}\min_{x\in\overline{\Om}_0}d(x),\qquad
\beta>\frac{1}{\delta_0^2-r^2}
\left(\max_{x\in\overline\Om}d(x)
-\min_{x\in\overline{\Om}_0}d(x)\right).
\end{equation}
Such a choice is possible: first take $r>0$ small enough that
\[
\frac{r^2}{\delta_0^2-r^2}
<\frac{\min_{x\in\overline{\Om}_0}d(x)}
{\max_{x\in\overline\Om}d(x)-\min_{x\in\overline{\Om}_0}d(x)},
\]
and then choose $\beta>0$ to satisfy both inequalities in \eqref{eq:beta-r}
simultaneously.

Define a regular weight function
\begin{equation}\label{eq:weights-reg}
\psi(x,t):=e^{\lambda(d(x)-\beta(t-t_0)^2)},
\qquad (x,t)\in Q_{I_0},
\end{equation}
where $\lambda>0$ is taken sufficiently large.
Set
\begin{equation}\label{eq:rho-def}
\rho_1:=\max\bigl\{1,\;
\exp\bigl(\lambda(\max_{x\in\overline\Om}d(x)
-\beta\delta_0^2)\bigr)\bigr\},\qquad
\rho_2:=\min_{x\in\overline{\Om}_0,\;|t-t_0|\le r}\psi(x,t).
\end{equation}
By the choice \eqref{eq:beta-r}, the following separation property holds
\begin{equation}\label{eq:rho-separation}
\rho_2>\rho_1.
\end{equation}

\begin{figure}[htbp]
\centering
\begin{tikzpicture}[>=Stealth,thick,
 declare function={
  psiA(\t) = exp(0.65*(2.0 - 0.22*(\t-3)*(\t-3)));
  psiB(\t) = exp(0.65*(1.2 - 0.22*(\t-3)*(\t-3)));
  psiC(\t) = exp(0.65*(0.1 - 0.22*(\t-3)*(\t-3)));
 }]

 \draw[->] (-0.5,0) -- (7.2,0) node[right,font=\small]{$t$};
 \draw[->] (0,-0.3) -- (0,5.5) node[above,font=\small]{$\psi$};

 \draw[thick, blue!55, densely dashed]
  plot[domain=0.3:5.7,samples=100,smooth] (\x,{psiC(\x)});
 \draw[thick, gray!65]
  plot[domain=0.3:5.7,samples=100,smooth] (\x,{psiB(\x)});
 \draw[very thick, red!70!black]
  plot[domain=0.3:5.7,samples=100,smooth] (\x,{psiA(\x)});

 \draw[thin] (0.5,0.08)--(0.5,-0.08)
  node[below,font=\small]{$t_0\!-\!\delta_0$};
 \draw[thin] (5.5,0.08)--(5.5,-0.08)
  node[below,font=\small]{$t_0\!+\!\delta_0$};
 \draw[thin] (3,0.08)--(3,-0.08)
  node[below,font=\small]{$t_0$};

 \draw[thick,orange!75!black,<->] (2.1,-0.65) -- (3.9,-0.65);
 \node[below,orange!75!black,font=\small] at (3,-0.65)
  {$|t\!-\!t_0|\le r$};

 \pgfmathsetmacro{\rhoone}{psiC(0.5)}
 \draw[densely dashed,gray!70] (0,\rhoone) -- (6.7,\rhoone)
  node[right,font=\small,black]{$\rho_1$};
 \pgfmathsetmacro{\rhotwo}{psiA(3.9)}
 \draw[densely dashed,gray!70] (0,\rhotwo) -- (6.7,\rhotwo)
  node[right,font=\small,black]{$\rho_2$};

 \draw[<->,very thick,black!80]
  (6.4,\rhoone) -- (6.4,\rhotwo);
 \node[right,font=\small] at (6.5,{0.5*\rhoone+0.5*\rhotwo})
  {$\rho_2\!>\!\rho_1$};

 \fill[red!8,opacity=0.4]
  (2.1,\rhotwo) rectangle (3.9,{psiA(3)+0.15});

 \draw[rounded corners=2pt, fill=white, draw=gray!50, thin]
  (4.3,4.0) rectangle (7.0,5.3);
 \draw[very thick, red!70!black] (4.5,5.0) -- (5.2,5.0);
 \node[right,font=\small] at (5.25,5.0)
  {$x\!\in\!\Omega_0$};
 \draw[thick, gray!65] (4.5,4.6) -- (5.2,4.6);
 \node[right,font=\small] at (5.25,4.6)
  {$x\!\in\!\Omega$};
 \draw[thick, blue!55, densely dashed] (4.5,4.2) -- (5.2,4.2);
 \node[right,font=\small] at (5.25,4.2)
  {$x\!\in\!\partial\Omega$};

\end{tikzpicture}
\caption{The regular weight
$\psi(x,t)=e^{\lambda(d(x)-\beta(t-t_0)^2)}$
at three representative locations.}
\label{fig:regular-weight}
\end{figure}
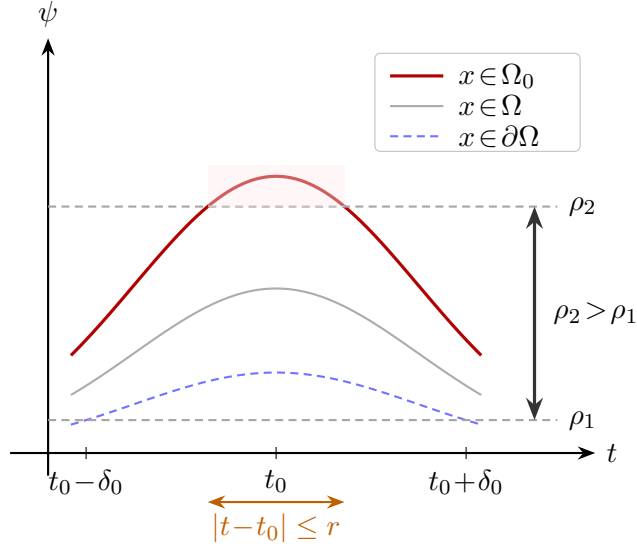

\begin{lemma}[Carleman estimate with regular weight]\label{lem:carleman-reg}
Let $\psi$ be defined by \eqref{eq:weights-reg}.
There exist constants $\lambda_0>0$, $s_0>0$, and $C>0$ such that for all
$\lambda\ge\lambda_0$ and $s\ge s_0$, the following holds for any
$w\in H^{2,1}(Q_{I_0})$ with $w=0$ on $\p\Om\times I_0$:
\begin{align}\label{eq:carleman-reg}
&\int_{Q_{I_0}}\Bigl(\frac{1}{s}|\p_t w|^2
+\frac{1}{s}\sum_{i,j=1}^n|\p_i\p_j w|^2
+s|\nabla w|^2+s^3|w|^2\Bigr)e^{2s\psi}\,dx\,dt\notag\\
&\qquad\le C\int_{Q_{I_0}}|\mathcal{L}w|^2 e^{2s\psi}\,dx\,dt
+C\int_{\omega_0\times I_0}s^3|w|^2 e^{2s\psi}\,dx\,dt\notag\\
&\qquad\quad
+C\Bigl(\norm{\nabla_{x,t}w}_{L^2(\p\Om\times I_0)}^2
+\norm{\nabla_x w(\cdot,t_0\pm\delta_0)}_{L^2(\Om)}^2\Bigr)
e^{2s\rho_1},
\end{align}
where $\mathcal{L}w:=\p_t w-d_k\Lap w+B(x,t)w$ with
$d_k\in\{d_1,d_2\}$ and $B\in L^\infty(Q_{I_0})$.
The last line accounts for the boundary and temporal-endpoint
contributions, which do not vanish since the weight $\psi$ remains
bounded on $\overline{Q}_{I_0}$.
\end{lemma}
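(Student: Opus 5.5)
We prove Lemma~\ref{lem:carleman-reg} by the standard conjugation argument for a regular weight, as in \cite[Chapter~4]{Y25}. The only change from the singular-weight case of Lemma~\ref{lem:carleman} is that the time-boundary and spatial-boundary terms produced by integration by parts no longer vanish; they have to be retained and bounded.

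We first reduce to $B=0$. Indeed, $|\mathcal L w|^2\ge \frac12|\p_t w-d_k\Lap w|^2-C|w|^2$, and the term $C\int|w|^2e^{2s\psi}$ is absorbed by $s^3\int|w|^2e^{2s\psi}$ once $s$ is large. Next set $\Phi:=s\psi$ and $\tilde w:=e^{s\psi}w$, and write $e^{s\psi}(\p_t-d_k\Lap)(e^{-s\psi}\tilde w)=P_+\tilde w+P_-\tilde w+R\tilde w$. Here
\[
P_+\tilde w=-d_k\Lap\tilde w-d_ks^2\lambda^2\psi^2|\nabla d|^2\tilde w-s(\p_t\psi)\tilde w
\]
is the symmetric part, and
\[
P_-\tilde w=\p_t\tilde w+2d_ks\lambda\psi\nabla d\cdot\nabla\tilde w+d_ks\lambda^2\psi|\nabla d|^2\tilde w
\]
is the antisymmetric part. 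The lower-order remainder $R$ is controlled by $s\lambda\psi|\tilde w|$ and is harmless. From $\|P_+\tilde w+P_-\tilde w\|^2\ge 2(P_+\tilde w,P_-\tilde w)$ we compute the cross term by integrating by parts in $x$ and $t$.

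The interior terms are treated as usual. Since $\p_t\psi=-2\lambda\beta(t-t_0)\psi$ and $\p_t^2\psi$ are $O(\lambda^2\psi)$ with $\beta$ fixed, all time-derivative contributions are of order $s^2\lambda^2\psi^2$, which is lower than the principal terms. The dominant positive terms are then $s^3\lambda^4\psi^3|\nabla d|^4|\tilde w|^2$ and $s\lambda^2\psi|\nabla d\cdot\nabla\tilde w|^2$. These are positive on $\Om\setminus\omega_0$, where $|\nabla d|>0$ by Lemma~\ref{lem:d}. On $\omega_0$, the zeroth-order term is simply moved to the right-hand side, giving the observation integral $\int_{\omega_0\times I_0}s^3|w|^2e^{2s\psi}$. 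The gradient on $\omega_0$ is recovered by the usual cut-off argument, enlarging $\omega_0$ slightly inside $\omega$ and choosing $d$ for the smaller set. After fixing $\lambda\ge\lambda_0$, the $\lambda$-powers are absorbed into the constant $C$. The second derivatives $\frac1s|\p_i\p_jw|^2$ and $\frac1s|\p_tw|^2$ come from $\|P_+\tilde w\|^2$ and $\|P_-\tilde w\|^2$ together with elliptic $H^2$ regularity for $w(\cdot,t)$ with zero Dirichlet data; this costs boundary terms of the same type as below.

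The main obstacle is the boundary bookkeeping. The spatial boundary terms come from $\int_{\p\Om}\p_\nu\tilde w\,(\cdots)$. Since $w=0$ on $\p\Om\times I_0$, they reduce to expressions like $s\lambda\psi\,\p_\nu d\,|\p_\nu\tilde w|^2$ and $\p_\nu\tilde w\,\p_t\tilde w$, where the latter vanishes because the tangential and time derivatives are zero. We do not need a sign here: we simply bound these terms by $Cs\,\|\nabla_{x,t}w\|^2_{L^2(\p\Om\times I_0)}e^{2s\max_{\p\Om\times I_0}\psi}$. On $\p\Om$ we have $d=0$, so $\psi\le 1\le\rho_1$ there.

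The temporal endpoint terms arise from integrating $\p_t$ by parts. They have the form $\int_\Om\bigl(d_k|\nabla\tilde w|^2-(s^2\lambda^2\psi^2|\nabla d|^2+s\p_t\psi)|\tilde w|^2\bigr)\big|_{t=t_0\pm\delta_0}dx$. At $t=t_0\pm\delta_0$ we have $\psi\le\exp(\lambda(\max d-\beta\delta_0^2))\le\rho_1$. We then bound $|\tilde w|^2$ by $|\nabla w|^2$ via Poincar\'e's inequality, and bound $|\nabla\tilde w|^2\le Cs^2\lambda^2(|\nabla w|^2+|w|^2)e^{2s\psi}$. Each endpoint term is therefore at most $Cs^3\|\nabla_xw(\cdot,t_0\pm\delta_0)\|^2_{L^2(\Om)}e^{2s\rho_1}$.

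The polynomial factors $s^3$ can be hidden in one of two ways. Either we note that the lemma is only used with $e^{2s\rho_1}$ multiplying a priori bounded quantities, or we replace $\rho_1$ by any slightly larger number still below $\rho_2$, which only enlarges $C$. We will take the second route, or simply allow $C$ to carry $s^3$ if the subsequent H\"older argument tolerates it. This separation $\rho_1<\rho_2$ is exactly what the later optimization in $s$ exploits. Finally, we return from $\tilde w$ to $w$ using $\nabla w=e^{-s\psi}(\nabla\tilde w-s\lambda\psi\nabla d\,\tilde w)$, which gives \eqref{eq:carleman-reg}.
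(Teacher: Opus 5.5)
Your route is the standard conjugation proof. The paper gives no argument of its own here; it only points to Theorem~4.3.2 of [Y25], which proceeds this way. Your interior analysis is fine at the usual level of detail.

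\textbf{The gap.} It is in the last step. What you actually derive has $Cs^3e^{2s\rho_1}$ in front of the endpoint norms and $Cse^{2s}$ in front of the lateral norm. You then propose either to replace $\rho_1$ by some $\rho_1'\in(\rho_1,\rho_2)$ or to let $C$ depend on $s$. Neither proves the lemma as stated, where $C$ is chosen before $s$ and $\rho_1$ is the specific number in \eqref{eq:rho-def}.

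The power of $s$ cannot be hidden in $C$ either. We have $\max_{\partial\Omega\times\overline{I_0}}\psi=1$, attained at $t=t_0$. If $\max_{\overline\Omega}d\ge\beta\delta_0^2$, then $\max_{\overline\Omega}\psi(\cdot,t_0\pm\delta_0)=\rho_1$. So for every choice of parameters, $\rho_1$ coincides with the maximum of $\psi$ on the lateral boundary or on the time endpoints, and a crude bound carrying $s^m$ is not dominated by $Ce^{2s\rho_1}$. Your weakened estimate is all that the proof of Theorem~\ref{thm:holder} uses, since $\rho_1<\rho_2$ strictly there, but it is a different statement.

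\textbf{How to remove both factors.}

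\emph{Lateral term.} Do not estimate it at all. Since $d>0$ in $\Omega$ and $d=0$ on $\partial\Omega$, you have $\partial_\nu d\le 0$. The lateral contribution of $2(P_+\tilde w,P_-\tilde w)$ is then $2d_k^2s\lambda\int_{\partial\Omega\times I_0}\psi\,|\partial_\nu d|\,|\partial_\nu\tilde w|^2\,d\sigma\,dt\ge 0$, so it can be dropped. This is exactly where the sign you said you do not need becomes useful.

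\emph{Endpoint terms.} At $t_*=t_0\pm\delta_0$ your endpoint functional is bounded by $C\bigl(G+X+s\int_\Omega\psi|\tilde w|^2dx\bigr)$. Here $G=\int_\Omega e^{2s\psi}|\nabla w|^2dx\le e^{2s\rho_1}\|\nabla w(\cdot,t_*)\|_{L^2(\Omega)}^2$ and $X=s^2\lambda^2\int_\Omega\psi^2|\nabla d|^2|\tilde w|^2dx$. Since $w(\cdot,t_*)\in H^1_0(\Omega)$, one has
\[
G=\int_\Omega|\nabla\tilde w|^2dx+\int_\Omega\bigl(s^2\lambda^2\psi^2|\nabla d|^2+s\lambda\,\mathrm{div}(\psi\nabla d)\bigr)|\tilde w|^2dx,
\]
and therefore $X\le G+Cs\int_\Omega\psi|\tilde w|^2dx$. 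On $\Omega\setminus\omega_0$ we have $|\nabla d|\ge c_0>0$, so there $s\psi|\tilde w|^2$ is $O(s^{-1})$ times the integrand of $X$. Hence, for large $s$, the whole endpoint functional reduces to $CG+Cs\int_{\omega_0}\psi|\tilde w(\cdot,t_*)|^2dx$.

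For this last term, let $J\subset I_0$ be the interval of length $1/s$ adjacent to $t_*$. For a.e.\ $x$,
\[
s|\tilde w(x,t_*)|^2\le s^2\int_J|\tilde w|^2dt+\epsilon s^{-1}\int_J|\partial_t\tilde w|^2dt+\epsilon^{-1}s^3\int_J|\tilde w|^2dt.
\]
This produces the observation term $s^3\int_{\omega_0\times I_0}|w|^2e^{2s\psi}$ plus $\epsilon s^{-1}\|\partial_t\tilde w\|^2$. The latter is absorbed by $\|P_-\tilde w\|^2$ together with the gradient and zeroth-order terms already on the left. This gives \eqref{eq:carleman-reg} as stated, and in fact without needing the lateral boundary norm.
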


The main difference from Lemma~\ref{lem:carleman} is that the weight $\psi$
does not blow up at the endpoints of $I_0$, so that the time-boundary
contributions do not automatically vanish and must be estimated using the
a priori bounds.
The proof of the lemma is found in \cite[Theorem~4.3.2]
{Y25}, for example.

\begin{proof}[Proof of Theorem~\ref{thm:holder}]
Set $y:=u-\widetilde u$, $z:=v-\widetilde v$, $F:=f-\widetilde f$,
$y_0(x):=y(x,t_0)$, and $y_1:=\p_t y$, $z_1:=\p_t z$.

\medskip
\noindent\textbf{Step 1: Representation using the integral relation.}

Since $y(x,t)=\int_{t_0}^t y_1(x,\eta)\,d\eta+y_0(x)$, we can rewrite
the time-differentiated equations.
Differentiating \eqref{eq:diffsystem} in $t$, we obtain
\begin{equation}\label{eq:y1-eq}
\p_t y_1=d_1\Lap y_1+p_1 y_1
+\biggl\{p_2 z_1+(\p_t p_1)\int_{t_0}^t y_1(x,\eta)\,d\eta
+(\p_t p_2)z+(\p_t p_1)y_0\biggr\},
\end{equation}
\begin{equation}\label{eq:z1-eq}
\p_t z_1=d_2\Lap z_1+p_3 z_1
+\biggl\{p_4 y_1+(\p_t p_3)z
+(\p_t p_4)\int_{t_0}^t y_1(x,\eta)\,d\eta+(\p_t p_4)y_0\biggr\},
\end{equation}
and the equation for $z$ reads
\begin{equation}\label{eq:z-eq-holder}
\p_t z=d_2\Lap z+p_3 z+p_4 y\qquad\text{in }Q_{I_0}.
\end{equation}
A key feature is that we apply the Carleman estimate to the system
\eqref{eq:y1-eq}--\eqref{eq:z-eq-holder} in the three unknowns
$(y_1,z_1,z)$, \emph{without} including $y$ itself.
This is possible because the terms involving $y$ appear either through
$y_0(x)$ (a known datum) or through the integral
$\int_{t_0}^t y_1(x,\eta)\,d\eta$, which is controlled by $y_1$.

\medskip
\noindent\textbf{Step 2: Integral estimate.}

The following estimate is needed to handle the integral terms.

\begin{lemma}\label{lem:integral}
For all $s>0$ and $w\in L^2(\Om\times I_0)$,
\[
\int_{Q_{I_0}}\left|\int_{t_0}^t w(x,\eta)\,d\eta\right|^2
e^{2s\psi(x,t)}\,dx\,dt
\le C\int_{Q_{I_0}}|w(x,\eta)|^2 e^{2s\psi(x,\eta)}\,d\eta\,dx.
\]
\end{lemma}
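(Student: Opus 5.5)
The estimate follows from Cauchy--Schwarz in time combined with the monotonicity of the weight $\psi$ in $|t-t_0|$. The point is that $\psi(x,t)=e^{\lambda(d(x)-\beta(t-t_0)^2)}$ is, for each fixed $x$, nonincreasing in $|t-t_0|$. Hence for any $\eta$ lying between $t_0$ and $t$ we have $\psi(x,t)\le\psi(x,\eta)$, and so $e^{2s\psi(x,t)}\le e^{2s\psi(x,\eta)}$. This is the only structural property of the weight we use, and it holds for every $s>0$. The constant $C$ will therefore depend only on $\delta_0$, uniformly in $s$ and $\lambda$.

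Fix $x\in\Om$ and $t\in I_0$, and assume first $t\ge t_0$. By the Cauchy--Schwarz inequality,
\[
\left|\int_{t_0}^t w(x,\eta)\,d\eta\right|^2\le |t-t_0|\int_{t_0}^t|w(x,\eta)|^2\,d\eta\le \delta_0\int_{t_0}^t|w(x,\eta)|^2\,d\eta .
\]
We multiply by $e^{2s\psi(x,t)}$ and move the weight inside the $\eta$-integral. Monotonicity gives
\[
\left|\int_{t_0}^t w\,d\eta\right|^2e^{2s\psi(x,t)}\le \delta_0\int_{t_0}^t|w(x,\eta)|^2e^{2s\psi(x,\eta)}\,d\eta\le\delta_0\int_{I_0}|w(x,\eta)|^2e^{2s\psi(x,\eta)}\,d\eta .
\]
The case $t<t_0$ is identical, integrating over $(t,t_0)$ and using $|\eta-t_0|\le|t-t_0|$ there.

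Next we integrate in $t\in I_0$. The right-hand side no longer depends on $t$, so this produces a factor $|I_0|=2\delta_0$. Integrating in $x\in\Om$ then yields the claim with $C=2\delta_0^2$. For general $w\in L^2(\Om\times I_0)$, Fubini's theorem justifies each step, since for a.e.\ $x$ the function $\eta\mapsto w(x,\eta)$ lies in $L^2(I_0)$ and all integrands are nonnegative.

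The only step needing care is keeping the weight inside the $\eta$-integral. Pulling out $\sup e^{2s\psi}$ instead would cost a factor $e^{Cs}$ and ruin the later absorption. Monotonicity of $\psi$ on the segment between $t_0$ and $t$ is exactly what avoids this, and it follows from $\psi$ being centered at $t_0$, the same point from which the integral starts.
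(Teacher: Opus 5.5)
Your proof is correct and follows the paper's argument: Cauchy--Schwarz in $\eta$, then the monotonicity $\psi(x,t)\le\psi(x,\eta)$ for $\eta$ between $t_0$ and $t$, which lets you move the weight inside the inner integral. The only difference is cosmetic: you enlarge the inner integral to all of $I_0$ before integrating in $t$ and get $C=2\delta_0^2$, whereas the paper treats the two halves of $I_0$ separately and gets $C=\delta_0^2$.
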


\begin{proof}
First we have $\psi(x, t) \leq \psi(x, \eta)$ for $t_0 \leq \eta \leq t \leq t_0+\delta_0$ or $t_0-\delta_0 \leq t \leq \eta \leq t_0$. Let $t_0 \leq t \leq t_0+\delta_0$. Then, using the Cauchy--Schwarz inequality, we estimate
$$
\begin{aligned}
& \int_{t_0}^{t_0+\delta_0}\left|\int_{t_0}^t w(x, \eta)\, d \eta\right|^2 e^{2 s \psi(x, t)}\, d t \leq \int_{t_0}^{t_0+\delta_0}\left(\int_{t_0}^t|w(x, \eta)|^2\, d \eta\right)\left|t-t_0\right| e^{2 s \psi(x, t)}\, d t \\
\leq{}& \delta_0 \int_{t_0}^{t_0+\delta_0}\left(\int_{t_0}^t|w(x, \eta)|^2 e^{2 s \psi(x, \eta)}\, d \eta\right) d t \leq \delta_0^2 \int_{t_0}^{t_0+\delta_0}|w(x, \eta)|^2 e^{2 s \psi(x, \eta)}\, d \eta.
\end{aligned}
$$
The estimation for $t_0-\delta_0 \leq t \leq t_0$ is the same. Thus the proof is complete.
\end{proof}

\medskip
\noindent\textbf{Step 3: Carleman estimate and absorption.}

Applying Lemma~\ref{lem:carleman-reg} to each of the three functions
$y_1$, $z_1$, and $z$ in the system
\eqref{eq:y1-eq}--\eqref{eq:z-eq-holder},
using Lemma~\ref{lem:integral} to control the integral terms
$\int_{t_0}^t y_1(x,\eta)\,d\eta$,
and summing the resulting inequalities, we obtain:
there exist $s_0>0$ and $C>0$ such that for all $s\ge s_0$,

\begin{align}
&\int_{Q_{I_0}}\Biggl\{\frac{1}{s}\Biggl(
\sum_{k=1}^2\bigl(|\p_t^k y|^2+|\p_t^k z|^2\bigr)
+\sum_{k=0}^1\sum_{i,j=1}^n
\bigl(|\p_t^k\p_i\p_j y|^2+|\p_t^k\p_i\p_j z|^2
\bigr)\Biggr)\notag\\
&\qquad\qquad\qquad
+s\sum_{k=0}^1\bigl(|\nabla\p_t^k y|^2+|\nabla\p_t^k z|^2\bigr)
+s^3\sum_{k=0}^1\bigl(|\p_t^k y|^2+|\p_t^k z|^2\bigr)
\Biggr\}e^{2s\psi}\,dx\,dt\notag\\
&\quad\le
Ce^{Cs}\,\mathcal{B}
+Ce^{2s\rho_1}\,\mathcal{N}.
\label{eq:holder-carleman-full}
\end{align}

Note that the source $F(x)$ does not appear on the right-hand side:
the time differentiation eliminates $F$ from the equations for $y_1$ and $z_1$,
because $F$ does not appear in the equation for $z$. The terms involving $y$ on the left-hand side arise via the integral relation $y=\int_{t_0}^t y_1\,d\eta+y_0$ and Lemma~\ref{lem:integral}. The terms $(\p_t p_k)y$ and $(\p_t p_k)z$ on the right-hand sides of \eqref{eq:y1-eq}--\eqref{eq:z1-eq} produce lower-order 
contribution that are absorbed into the left-hand side by choosing $s$ sufficiently large; see the analogous absorption in \eqref{eq:carleman-preabsorb}.

Using $\psi(x,t)\ge\rho_2$ on
$\overline{\Om}_0\times[t_0-r,t_0+r]$, by the separation
\eqref{eq:rho-separation}, we deduce from \eqref{eq:holder-carleman-full}

\begin{align}
&e^{2s\rho_2}\int_{\Om_0\times(t_0-r,t_0+r)}
\Biggl\{\frac{1}{s}\Biggl(
\sum_{k=1}^2\Bigl(|\p_t^k y|^2+|\p_t^k z|^2\Bigr)
+\sum_{k=0}^1\sum_{i,j=1}^n
\Bigl(|\p_t^k\p_i\p_j y|^2+|\p_t^k\p_i\p_j z|^2\Bigr)
\Biggr)\notag\\
&\qquad\qquad
+s\sum_{k=0}^1\Bigl(|\nabla\p_t^k y|^2+|\nabla\p_t^k z|^2\Bigr)
+s^3\sum_{k=0}^1\Bigl(|\p_t^k y|^2+|\p_t^k z|^2\Bigr)
\Biggr\}\,dx\,dt
\notag\\
&\quad\le
\int_{Q_{I_0}}\Biggl\{\frac{1}{s}\Biggl(
\sum_{k=1}^2\Bigl(|\p_t^k y|^2+|\p_t^k z|^2\Bigr)
+\sum_{k=0}^1\sum_{i,j=1}^n
\Bigl(|\p_t^k\p_i\p_j y|^2+|\p_t^k\p_i\p_j z|^2\Bigr)
\Biggr)\notag\\
&\qquad\qquad
+s\sum_{k=0}^1\Bigl(|\nabla\p_t^k y|^2+|\nabla\p_t^k z|^2\Bigr)
+s^3\sum_{k=0}^1\Bigl(|\p_t^k y|^2+|\p_t^k z|^2\Bigr)
\Biggr\}e^{2s\psi}\,dx\,dt
\notag\\
&\quad\le
Ce^{Cs}\,\mathcal{B}+Ce^{2s\rho_1}\,\mathcal{N}
\label{eq:holder-carleman}
\end{align}
for all $s\ge s_0$.

\medskip
\noindent\textbf{Step 4: H\"older interpolation and source recovery.}

Using the separation $\rho_2>\rho_1$ from \eqref{eq:rho-separation},
dividing \eqref{eq:holder-carleman} by $e^{2s\rho_2}$,
and minimising the right-hand side over $s>0$, 
we find $\theta\in(0,1)$ such that
\begin{align}
&\int_{\Om_0\times(t_0-r,t_0+r)}
\Biggl[\sum_{k=1}^2\Bigl(|\p_t^k y|^2+|\p_t^k z|^2\Bigr)
+\sum_{k=0}^1\Bigl(\sum_{i,j=1}^n
(|\p_t^k\p_i\p_j y|^2+|\p_t^k\p_i\p_j z|^2)\notag\\
&\qquad\qquad
+|\nabla\p_t^k y|^2+|\nabla\p_t^k z|^2
+|\p_t^k y|^2+|\p_t^k z|^2\Bigr)\Biggr]\,dx\,dt
\le C\,\mathcal{B}^{\theta}.
\label{eq:holder-interp}
\end{align}
Such a minimising argument is rather commonly applied and see for 
example \cite[Proof of Theorem~3.3.1]{Y25}).

The Sobolev embedding implies
$$
H^1(t_0-r,t_0+r;L^2(\Om_0))
\hookrightarrow C([t_0-r,t_0+r];L^2(\Om_0)),
$$
that is, 
$$
\left\|\partial_t y\left(\cdot, t_0\right)\right\|_{L^2\left(\Omega_0\right)} \leq C\left\|\partial_t y\right\|_{H^1\left(t_0-r, t_0+r ; L^2\left(\Omega_0\right)\right)}
$$
and
$$
\left\|\Delta y\left(\cdot, t_0\right)\right\|_{L^2\left(\Omega_0\right)} \leq C\|\Delta y\|_{H^1\left(t_0-r, t_0+r ; L^2\left(\Omega_0\right)\right)}.
$$
The left-hand side of \eqref{eq:holder-interp} controls
$\norm{\p_t y(\cdot,t_0)}_{L^2(\Om_0)}$ and
$\norm{\Lap y(\cdot,t_0)}_{L^2(\Om_0)}$.
Evaluating the first equation of \eqref{eq:diffsystem} at $t=t_0$ gives
\[
F(x)=\p_t y(x,t_0)-d_1\Lap y(x,t_0)-p_1(x,t_0)y(x,t_0)
-p_2(x,t_0)z(x,t_0),
\]
and hence
\[
\norm{F}_{L^2(\Om_0)}\le C\Bigl(
\norm{\p_t y(\cdot,t_0)}_{L^2(\Om_0)}
+\norm{\Lap y(\cdot,t_0)}_{L^2(\Om_0)}
+\norm{y(\cdot,t_0)}_{L^2(\Om_0)}
+\norm{z(\cdot,t_0)}_{L^2(\Om_0)}\Bigr).
\]
Each term on the right is controlled by \eqref{eq:holder-interp} and the
data norm $\mathcal{B}$, yielding
$$\norm{f-\tilde f}_{L^2(\Om_0)}\le C\,\mathcal{B}^\theta.$$
\end{proof}

The application of the Carleman estimate to only three functions $(y_1,z,z_1)$ rather than four $(y,z,y_1,z_1)$ is a technical advantage specific to the structure of the system: since the unknown source $F(x)$ appears only in the equation for $y$, differentiating in $t$ eliminates $F$ from the equation for $y_1$, and the remaining occurrence of $y$ can be handled through the integral relation $y=\int_{t_0}^t y_1\,d\eta+y_0$. This is why the data $v(\cdot,t_0)|_\Om$ is not needed in the present
formulation.
If the second equation in \eqref{eq:diffsystem} also contained an unknown source in $x$, then the full set of four functions would be required, necessitating the additional snapshot $z(\cdot,t_0)$.

\section{Proofs of Corollaries~\ref{cor:red2}--\ref{cor:red4}}
\label{app:reduction}

We provide detailed proofs of 
Corollaries~\ref{cor:red2}--\ref{cor:red4}. Recall that $I:= (t_1, \, t_2)$ with $0<t_1<t_0<t_2 < T$. We begin with the following lemma.
\begin{lemma}
\label{lem:pos}
Let $(u,v)$ be a solution to \eqref{eq:forward} - \eqref{eq:ic} with $a_1, a_2 \in C(\ooo{Q})$.
\\
(i) Let $v \in C(\ooo\OOO \times \ooo{I}) \cap C^2(\OOO \times I)$ and 
$u \in C(\ooo\OOO \times \ooo{I})$.
If \eqref{eq:5} and \eqref{eq:6} hold, then there exists a constant $m_0 > 0$ such that 
$v(x,t) \ge m_0$ for $x \in \ooo{\omega}$ and $t \in \ooo{I}$.
\\
(ii) Let $u \in C(\ooo\OOO \times \ooo{I}) \cap C^2(\OOO \times I)$,
$v \in C(\ooo\OOO \times \ooo{I})$, and let $f \ge 0$ in $\OOO$.
If \eqref{eq:7} and \eqref{eq:8} hold, then there exists a constant $m_0 > 0$ such that 
$u(x,t) \ge m_0$ for $x \in \ooo{\omega}$ and $t \in \ooo{I}$.
\end{lemma}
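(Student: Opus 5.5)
We would prove both parts by the same scheme: rewrite each equation as a linear parabolic inequality with a bounded zeroth-order coefficient, apply the weak maximum principle to get nonnegativity on $\ooo\OOO\times[0,T]$, and then apply the strong maximum principle to get strict positivity on compact subsets of $\OOO\times(0,T]$. Compactness of $\ooo\omega\times\ooo I$ then yields the uniform lower bound $m_0$.

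For (i), we write the $v$-equation as $\p_t v-d_2\Lap v+c(x,t)v=0$ with $c:=a_2-uv\in C(\ooo\OOO\times\ooo I)$. The coefficient $c$ is bounded by the $W^{1,\infty}$ bounds in $\mathcal U(M)$ and the continuity of $a_2$. Set $K:=\|c\|_{L^\infty}$ and $w:=e^{Kt}v$. Then $\p_t w-d_2\Lap w+(c+K)w=0$ with $c+K\ge0$. Since $w\ge0$ at $t=0$ by \eqref{eq:5} and $w\ge0$ on $\Sigma$, the weak maximum principle gives $w\ge0$, hence $v\ge0$ in $Q$.

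Next we show $v\not\equiv0$ on each time slice. Suppose $v(\cdot,t_*)\equiv0$ for some $t_*>0$. Backward uniqueness for linear parabolic equations with bounded coefficients would then force $v_0\equiv0$, contradicting \eqref{eq:6}. A simpler route avoids backward uniqueness: compare $v$ with the solution $\underline v$ of $\p_t\underline v-d_2\Lap\underline v+K\underline v=0$ having zero boundary data and initial value $v_0$. By the comparison principle $v\ge\underline v$, and $\underline v>0$ in $\OOO\times(0,T]$ by the strong maximum principle for the heat semigroup, since $v_0\ge0$ and $v_0\not\equiv0$. This gives $v>0$ on $\OOO\times(0,T]$. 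Since $\ooo\omega\subset\OOO$ and $\ooo I\subset(0,T)$, continuity of $v$ on the compact set $\ooo\omega\times\ooo I$ yields $m_0:=\min v>0$.

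For (ii) the argument is identical applied to $\p_t u-d_1\Lap u+(v^2+a_1)u=f\ge0$. Here $u$ is a supersolution of the homogeneous equation, so we compare it with the solution of $\p_t\underline u-d_1\Lap\underline u+K'\underline u=0$ with zero boundary data and initial value $u_0$, where $K'\ge\|v^2+a_1\|_\infty$. Note that $v^2+a_1\ge0$, so no sign issue arises. The data conditions \eqref{eq:7} and \eqref{eq:8} then give $\underline u>0$, hence $u\ge\underline u>0$ on $\ooo\omega\times\ooo I$.

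The main obstacle is regularity. The classical maximum principles need $C^{2,1}$ interior regularity, while only $C^2$ in $x$ on $\OOO\times I$ plus continuity is assumed, and the comparison must run over $(0,T)$ rather than just over $I$. We would handle this by working with the weak, $H^{2,1}$ version of the comparison principle. To do so, we test the equation for $(\underline v-v)^+$, or $(\underline u-u)^+$, against that same function and close with Gronwall's inequality, using $y^+\in L^2(0,T;H^1_0)$. Strict positivity of $\underline v$ and $\underline u$ follows from the explicit heat-kernel representation, which has a positive kernel.
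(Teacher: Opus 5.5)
Your proof is correct, and its key positivity step differs from the paper's. The paper also uses the weak maximum principle to get $v\ge0$ (resp.\ $u\ge0$). It then argues by contradiction: a zero of $v$ at some $(x_*,t_*)\in\ooo\omega\times\ooo I$ is an interior minimum of a nonnegative solution of $\p_t v=d_2\Lap v+bv$. The strong maximum principle, applied to $v$ itself, then forces $v\equiv0$ on $\OOO\times(0,t_*]$, contradicting $v_0\not\equiv0$. Part (ii) runs the same way, using $-\p_t u+d_1\Lap u+cu=-f\le0$.

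You instead move the strong positivity onto the constant-coefficient comparison function $\underline v=e^{-Kt}S(t)v_0$, where $S(t)$ is the Dirichlet heat semigroup. For $v$ you then need only an energy-type comparison principle, namely testing with $(\underline v-v)^+$. This works because $v\ge0$ is already known, so $(K-c)v\ge0$.

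What each route buys:
\begin{itemize}
\item The paper's route is shorter. But, as you point out, it needs classical regularity of $v$ on all of $\OOO\times(0,t_*]$ to carry the zero back to $t=0$. On $\OOO\times(t_1,t_*]$ alone it only gives $v(\cdot,t_1)\equiv0$, while the lemma assumes $C^2$ only on $\OOO\times I$.
\item Your route uses only the $H^{2,1}(Q)\cap L^\infty(Q)$ regularity available in $\mathcal U(M)$ and only boundedness of $a_1,a_2$.
\item Your route also gives a quantitative bound $m_0\ge\min_{\ooo\omega\times\ooo I}e^{-Kt}S(t)v_0$ with $K\le M_0+M^2$. That is what one would need for the stability versions of the corollaries.
\end{itemize}

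Two small corrections:
\begin{itemize}
\item With $w=e^{Kt}v$ the zeroth-order coefficient becomes $c-K$, so you want $w=e^{-Kt}v$. This is harmless, since your Gronwall argument needs no sign condition.
\item The Dirichlet heat kernel on a general domain is not explicit. Its strict positivity on the connected domain $\OOO$ is the classical strong maximum principle for the heat equation, which is legitimate here because $\underline v$ is smooth in $\OOO\times(0,T]$.
\end{itemize}
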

{\bf Proof of Lemma 5.1.}
\\
{\bf Proof of (i).}
We rewrite the second equation in \eqref{eq:forward} as 
$$
\ppp_tv = d_2\Delta v + b(x,t)v \quad \text{in } Q:= \OOO\times (0,T),
$$
where $b:= uv - a_2$ in $Q$. We note that $b \in C(\ooo{Q})$.
By  \eqref{eq:5}, the weak maximum principle (e.g., Renardy and Rogers \cite{RR})
implies that $v\ge 0$ on $\ooo{Q}$.
Assume that there exist $x_*\in \ooo{\omega} \subset \OOO$ and 
$t_* \in [t_1,t_2]$ with $t_1 > 0$ such that 
$v(x_*,t_*) = 0$.
This means that $v$ attains the minimum $0$ over $\ooo{Q}$ 
at an interior point $(x_*,t_*) \in Q$. Hence, the strong maximum principle
(e.g., Theorem 4.26 (p.122) in \cite{RR}) yields that 
$v = 0$ on $\ooo{\OOO} \times [0,t_*]$, which is a contradiction with 
$v_0 = v(\cdot,0) \not\equiv 0$ in $\OOO$.
Therefore, $\min_{(x,t) \in \ooo{\omega} \times \ooo{I}} v(x,t) > 0$.
\\
\\
{\bf Proof of (ii).}
We can rewrite the first equation in \eqref{eq:forward} as 
$$
\ppp_tu = d_1\Delta u + c(x,t)u + f \quad \text{in } Q:= \OOO\times (0,T),
$$
where $c:= -v^2 - a_1$ in $Q$. Similarly, by \eqref{eq:7} and $f \ge 0$ in $Q$,
the weak maximum principle implies that $u\ge 0$ on $\ooo{Q}$.
We note $-\ppp_tu + d_1\Delta u + cu = -f \le 0$ in $Q$.
Assume that there exist $x_*\in \ooo{\omega} \subset \OOO$ and 
$t_* \in [t_1,t_2]$ with $t_1 > 0$ such that 
$u(x_*,t_*) = 0$.
This means that $u$ attains the minimum $0$ over $\ooo{Q}$ 
at an interior point $(x_*,t_*) \in Q$. Hence, again 
the strong maximum principle yields that 
$u = 0$ on $\ooo{\OOO} \times [0,t_*]$, which is a contradiction with 
$u_0 = u(\cdot,0) \not\equiv 0$ in $\OOO$.
Thus the proof of (ii) is complete.
 $\blacksquare$ 

Each proof shows that the corresponding reduced measurement 
$\mathcal{M}_j$ determines both $u$ and $v$ in 
$\omega\times I$, thereby recovering the reference measurement 
$\mathcal{M}_1$.
The uniqueness of the source $f$ follows from 
Theorem~\ref{thm:reference}.

\begin{proof}[Proof of Corollary~\ref{cor:red2}]
The identity $\mathcal{M}_2(u,v,f) = \mathcal{M}_2(\widetilde u,\widetilde v,
\widetilde f)$ means, by definition, that
\[
u(\cdot,t_0) = \widetilde u(\cdot,t_0) \quad\text{in }\Omega, \qquad
v = \widetilde v \quad\text{in }\omega\times I.
\]
In $\omega\times I$, the pairs $(u,v)$ and $(\widetilde u,\widetilde v)$ 
satisfy
\[
\partial_t v=d_2\Delta v+uv^2-a_2v,
\qquad
\partial_t \widetilde v=d_2\Delta \widetilde v+\widetilde u\,\widetilde v^2
-a_2\widetilde v.
\]
Subtracting the two equations and using $v = \widetilde v$ in 
$\omega\times I$, 
we obtain
\[
(u-\tilde u)\,v^2=0
\qquad \text{in } \omega\times I.
\]
By Lemma \ref{lem:pos}(i), applied to $(u,v)$, 
there exists $m_0>0$ such that
\[
v(x,t)\ge m_0
\qquad \text{for all } (x,t)\in \omega\times I.
\]
Hence \(v^2>0\) in \(\omega\times I\), and therefore $ u = \widetilde u$
in $\omega\times I$.
Thus the full reference measurement \(\mathcal M_1\) is recovered.
Theorem~\ref{thm:reference} now yields
\[
f=\widetilde f \qquad \text{in } \Omega.
\]
\end{proof}

\begin{proof}[Proof of Corollary~\ref{cor:red3}]
The identity $\mathcal{M}_3(u,v,f)=\mathcal{M}_3(\widetilde u,\widetilde v,
\widetilde f)$ means, by definition, that
\[
u(\cdot,t_0)=\widetilde u(\cdot,t_0) \quad\text{in }\Omega, \qquad
u=\widetilde u \quad\text{in }\omega\times I, \qquad
f=\widetilde f \quad\text{in }\omega.
\]
In $\omega\times I$, the first equations for $(u,v)$ and $(\tilde u,\tilde v)$ give
\[
\partial_t u=d_1\Delta u-uv^2-a_1u+f,
\qquad
\partial_t \widetilde u=d_1\Delta \widetilde u
-\widetilde u\,\widetilde v^2-a_1\widetilde u+\widetilde f.
\]
Subtracting the two equations and using
\(u=\widetilde u\) in \(\omega\times I\) and \(f=\widetilde f\) in \(\omega\),
we obtain
\[
u\,(v^2-\widetilde v^2)=0
\qquad \text{in } \omega\times I.
\]
By Lemma~\ref{lem:pos}(ii), applied to $(u,v)$, there exists $m_0>0$ such that
\[
u(x,t)\ge m_0
\qquad \text{for all } (x,t)\in \omega\times I.
\]
Hence $ v^2=\widetilde v^2$ in $\omega\times I$.
Moreover, by Lemma~\ref{lem:pos}(i), applied to both $(u,v)$ and
$(\widetilde u,\widetilde v)$, one has
\[
v\ge 0,
\qquad
\widetilde v\ge 0
\qquad \text{in } \omega\times I.
\]
Therefore $ v=\widetilde v$ in $\omega\times I$.
Thus the reference measurement \(\mathcal M_1\) is recovered, and
Theorem~\ref{thm:reference} implies
\[
f=\widetilde f \qquad \text{in } \Omega.
\]
\end{proof}

\begin{proof}[Proof of Corollary~\ref{cor:red4}]
Let $y:=u-\widetilde u$, $z:=v-\widetilde v$, and $F:=f-\widetilde f$.
The identity $\mathcal{M}_4(u,v,f)=\mathcal{M}_4(\widetilde u,
\widetilde v,\widetilde f)$
means, by definition, that
\[
y(\cdot,t_0)=0 \quad\text{in }\Omega, \qquad
z(\cdot,t_0)=0 \quad\text{in }\Omega, \qquad
y=0 \quad\text{in }\omega\times I.
\]
Since $y=0$ in $\omega\times I$, one has
$\partial_t y=0$ and $\Delta y=0$ in $\omega\times I$.
Restricting the first equation of the difference system \eqref{eq:diffsystem} to \(\omega\times I\), we obtain
\[
0=\partial_t y-d_1\Delta y-p_1y=p_2z+F
\qquad \text{in } \omega\times I,
\]
where $p_2=-\widetilde u\,(v+\widetilde v)$ is defined by \eqref{eq:pcoeff}.
Evaluating this identity at \(t=t_0\) and using \(z(\cdot,t_0)=0\) 
in \(\Omega\),
we find that $ F(x)=0 \quad \text{in } \omega.$
Substituting this back into \(p_2z+F=0\), we obtain \[ p_2\,z=0 \quad 
\text{in } \omega\times I.\]
By Lemma~\ref{lem:pos}(ii), applied to both $(u,v)$ and 
$(\widetilde u,\widetilde v)$,
there exists \(m_1>0\) such that
\[
u(x,t)\ge m_1,
\qquad
\widetilde u(x,t)\ge m_1
\qquad \text{for all } (x,t)\in \omega\times I.
\]
By Lemma~\ref{lem:pos}(i), applied to both $(u,v)$ and 
$(\widetilde u,\widetilde v)$,
there exists \(m_2>0\) such that
\[
v(x,t)\ge m_2,
\qquad
\widetilde v(x,t)\ge m_2
\qquad \text{for all } (x,t)\in \omega\times I.
\]
Hence $|p_2(x,t)|
= \widetilde u(x,t)\,(v(x,t)+\widetilde v(x,t))
\ge 2m_1m_2>0$ in $\omega\times I$.
Therefore \(z=0\) on \(\omega\times I\), that is,
$v=\widetilde v$ on $\omega\times I$.
Since \(y=0\) in \(\omega\times I\), the reference measurement \(\mathcal M_1\)
is recovered.
Theorem~\ref{thm:reference} now yields
\[
f=\widetilde f
\qquad \text{in } \Omega.
\]
\end{proof}


\section{Conclusion}\label{sec:conclusion}

We have established stability estimates for the inverse source problem in a coupled reaction--diffusion system, namely the Klausmeier-Gray-Scott model arising in an ecosystem model. This system is characterised by the quadratic nonlinearity term. Our results can be summarised as follows.

Theorem~\ref{thm:reference} provides a Lipschitz stability estimate for the spatially dependent source $f$ from measurement data consisting of a
full-domain snapshot $u(\cdot,t_0)|_\Om$ and interior observation data 
of both components in $\omega\times I$.
Theorem~\ref{thm:holder} provides a H\"older stability estimate on any interior subdomain $\Om_0$ of $\Om$ from the same type of data.
Corollaries~\ref{cor:red2}--\ref{cor:red4} show that three reduced measurement configurations determine the same source under proper positivity assumptions on the observation region.

Data reduction in our inverse problem is possible because the algebraic structure of the nonlinear coupling term $uv^2$ compensates for missing observations.
This is a consequence of the specific form of the quadratic nonlinearity: the quadratic dependence on $v$ and the linear dependence on $u$ allow
algebraic recovery of one component from the other on the observation region, provided neither component vanishes there.

Several directions for further investigation are suggested by the present work. 
First, it is of interest to study the case where unknown source terms appear in both equations simultaneously, which requires the additional snapshot $v(\cdot,t_0)|_\Om$ and a modified Carleman argument.
Second, the extension to systems with advection (as in the original Klausmeier model on hillslopes, cf.\ \cite{klausmeier1999regular}) requires Carleman estimates for parabolic-transport equations.
Third, the development of efficient numerical algorithms for source reconstruction, building on the stability estimates established here,
is an important practical question.
Finally, it remains an interesting problem whether the nondegeneracy conditions \eqref{eq:5} - \eqref{eq:8} can be weakened or removed.

We also note that all results in this paper extend, with straightforward modifications, to the case where the source term $f(x)$ in the first equation of \eqref{eq:forward} is replaced by $R(x,t)\,f(x)$, provided $R$ is a known function satisfying $R(\cdot,t_0)>0$ on $\overline{\Om}$. Indeed, the source identity at $t=t_0$ in the proof of Theorem~\ref{thm:reference} reads 
$R(x,t_0)\,F(x)=\partial_t y(x,t_0)-d_1\Delta y(x,t_0)-p_1(x,t_0)y(x,t_0)
- p_2(x,t_0)z(x,t_0)$, and the positivity of $R(\cdot,t_0)$ allows 
the recovery of $F=f-\widetilde f$.

\section*{Acknowledgments}
This work was partly supported by the National Key Research and Development  Program of China (No.~2024YFA1012401) and the National Natural Science Foundation of China (No.~12241103). Masahiro Yamamoto was partly supported by Grant-in-Aid for Challenging Research (Pioneering) 21K18142 of Japan Society for the Promotion of Science.


\end{document}